\newtheorem{remark}{Remark}
\newtheorem{lemma}{Lemma}
\newtheorem{proposition}{Proposition}
\newtheorem{theorem}{Theorem}
\newtheorem{corollary}{Corollary}
\newcommand{\R}{{\mathbb R}}
\newcommand{\C}{{\mathbb C}}
\newcommand{\eps}{{\varepsilon}}
\newcommand{\bL}{{\bf L}}
\newcommand{\ee}{{\rm e}}
\newcommand{\dif}{{\rm d}}
\newcommand{\Real}{\text{\sf Re}}
\newcommand{\uW}{\underline{W}}
\newcommand{\ubu}{\underline{\bf u}}
\newcommand{\ubV}{\underline{\bf V}}
\newcommand{\vit}{{\bf u}}
\newcommand{\bu}{{\bf u}}
\newcommand{\bU}{{\bf U}}
\newcommand{\bv}{{\bf v}}
\newcommand{\hbv}{\widehat{{\bf v}}}
\newcommand{\tbv}{\widetilde{{\bf v}}}
\newcommand{\br}{{\bf r}}
\newcommand{\hrb}{\overline{\widehat{r}}}
\newcommand{\hbr}{\widehat{{\bf r}}}
\newcommand{\bV}{{\bf V}}
\newcommand{\hv}{\widehat{v}}
\newcommand{\bw}{{\bf w}}
\newcommand{\hw}{\widehat{w}}
\newcommand{\hu}{\widehat{u}}
\newcommand{\hbw}{\widehat{{\bf w}}}
\newcommand{\bh}{{\bf h}}
\newcommand{\bn}{{\bf n}}
\newcommand{\bN}{{\bf N}}
\newcommand{\bM}{{\bf M}}
\newcommand{\bP}{{\bf P}}
\newcommand{\bQ}{{\bf Q}}
\newcommand{\bB}{{\bf B}}
\newcommand{\bS}{{\bf S}}
\newcommand{\bC}{{\bf C}}
\newcommand{\bnu}{{\boldsymbol \nu}}
\newcommand{\bxi}{{\boldsymbol \xi}}
\newcommand{\bfeta}{{\boldsymbol \eta}}
\newcommand{\bx}{{\bf x}}
\newcommand{\ual}{{u_\alpha}}
\newcommand{\ube}{{u_\beta}}
\newcommand{\uga}{{u_\gamma}}
\newcommand{\ualj}{{u_{\alpha,j}}}
\newcommand{\ubej}{{u_{\beta,j}}}
\newcommand{\ubel}{{u_{\beta,\ell}}}
\newcommand{\ugal}{{u_{\gamma,\ell}}}
\newcommand{\ugam}{{u_{\gamma,m}}}
\newcommand{\vbe}{{v_\beta}}
\newcommand{\vga}{{v_\gamma}}
\newcommand{\valj}{{v_{\alpha,j}}}
\newcommand{\vbej}{{v_{\beta,j}}}
\newcommand{\vbel}{{v_{\beta,\ell}}}
\newcommand{\vgal}{{v_{\gamma,\ell}}}
\newcommand{\vgam}{{v_{\gamma,m}}}
\newcommand{\caljbel}{{c_{\alpha j \beta \ell}}}
\newcommand{\ealbejgal}{{e_{\alpha \beta j \gamma \ell}}}
\newcommand{\ealgalbej}{{e_{\alpha \gamma \ell \beta j}}}
\newcommand{\ebealjgal}{{e_{\beta \alpha j \gamma \ell}}}
\newcommand{\egaaljbel}{{e_{\gamma \alpha j \beta \ell}}}
\newcommand{\ealjbelga}{{e_{\alpha  j \beta \ell \gamma}}}
\newcommand{\ealjbegal}{{e_{\alpha j \beta \gamma  \ell}}}
\newcommand{\daljbelgam}{{d_{\alpha j \beta \ell \gamma  m}}}
\newcommand{\Lag}{{\mathcal L}}
\newcommand{\En}{{\mathcal W}}
\newcommand{\Pun}{\mbox{\rm (C1)}}
\newcommand{\Pde}{\mbox{\rm (C2)}}
\newcommand{\Hunter}{\mbox{\rm (H)}}
\newcommand{\Hun}{\mbox{\rm (H1)}}
\newcommand{\Hde}{\mbox{\rm (H2)}}
\newcommand{\Htr}{\mbox{\rm (H3)}}
\newcommand{\Hq}{\mbox{\rm (H4)}}
\newcommand{\NLBVP}{\mbox{\rm (NLBVP)}}
\newcommand{\LBVP}{\mbox{\rm (LBVP)}}
\newcommand{\one}{\mbox{\rm (P1)}}
\newcommand{\hone}{\widehat{\mbox{\rm (P1)}}}
\newcommand{\honeadj}{{\mbox{\rm (Q1)}}}
\newcommand{\tone}{\widetilde{\mbox{\rm (P1)}}}
\newcommand{\two}{\mbox{\rm (P2)}}
\newcommand{\htwo}{\widehat{\mbox{\rm (P2)}}}
\newcommand{\tr}{\mbox{\sf tr}}
\newcommand{\sgn}{\mbox{\sf sgn}}
\begin{document}

\tikzstyle{every picture}+=[remember picture]

\title{Amplitude equations  for weakly nonlinear surface waves in variational problems}

\author{Sylvie {\sc Benzoni-Gavage}$^\dag$ \& Jean-Fran\c{c}ois {\sc Coulombel}$^\ddag$\\
$ $\\
{\small $\dag$ Universit\'e de Lyon, Universit\'e Claude Bernard Lyon 1,}\\
{\small CNRS, UMR5208, Institut Camille Jordan, 43 boulevard du 11 novembre 1918}\\
{\small F-69622 Villeurbanne-Cedex, France}\\
{\small $\ddag$ CNRS, Universit\'e de Nantes, Laboratoire de Math\'ematiques Jean Leray (CNRS UMR6629)}\\
{\small 2 rue de la Houssini\`ere, BP 92208, 44322 Nantes Cedex 3, France}\\
{\small Emails: {\tt benzoni@math.univ-lyon1.fr, jean-francois.coulombel@univ-nantes.fr}}}
\date{\today}
\maketitle

\begin{abstract}
Among hyperbolic Initial Boundary Value Problems (IBVP), those coming from a variational principle 
`generically' admit linear surface waves, as was shown by Serre [J. Funct. Anal. 2006]. At the weakly 
nonlinear level, the behavior of surface waves is expected to be governed by an amplitude equation 
that can be derived by means of a formal asymptotic expansion. Amplitude equations for weakly 
nonlinear surface waves were introduced by Lardner [Int. J. Engng Sci. 1983], Parker and co-workers 
[J. Elasticity 1985] in the framework of elasticity, and by Hunter [Contemp. Math. 1989]  for abstract 
hyperbolic problems. They consist of nonlocal evolution equations involving a complicated, bilinear 
Fourier multiplier in the direction of propagation along the boundary. It was shown by the authors in 
an earlier work [Arch. Ration. Mech. Anal. 2012] that this multiplier, or kernel, inherits some algebraic 
properties from the original IBVP. These properties are crucial for the (local) well-posedness of the 
amplitude equation, as shown together with Tzvetkov [Adv. Math., 2011]. Properties of amplitude 
equations are revisited here in a somehow simpler way, for surface waves in a variational setting. 
Applications include various physical models, from elasticity of course to the director-field system 
for liquid crystals introduced by Saxton [Contemp. Math. 1989] and studied by Austria and Hunter 
[Commun. Inf. Syst. 2013]. Similar properties are eventually shown for the amplitude equation 
associated with surface waves at reversible phase boundaries in compressible fluids, thus completing 
a work initiated by Benzoni-Gavage and Rosini [Comput. Math. Appl. 2009].
\end{abstract}

\noindent {\small {\bf AMS subject classification:} 35L53, 35L50, 74B20, 35L20.}

\noindent {\small {\bf Keywords:} surface wave,  weakly nonlinear expansion, amplitude equation, 
non-local Burgers equation, non-local Hamilton--Jacobi equation, Hamiltonian structure, 
Oseen--Frank energy, phase boundaries.}

\tableofcontents

\section{Introduction}
\label{intro}
In view of its topic and bibliography, this paper may look as though it were written in the honor of either 
John Hunter or Denis Serre. In fact, it is dedicated to a mathematician of the same generation, on the 
occasion of his 65th birthday, and this is not by chance. Guy M\'etivier has indeed been very influential 
in the work of both authors since the 1990s, and especially regarding two underlying topics in this paper, 
namely the stability of shocks and geometric optics.

Everything began with the discovery of \emph{surface waves}\footnote{Emphasized words are explained 
in the bulk of the paper.} associated with - somehow idealized - propagating {phase boundaries} 
\cite{Benzoni1998}, which thus departed from the case of \emph{classical shocks} investigated earlier by 
Majda \cite{Majda}. Surface waves are special instances of so-called \emph{neutral modes} that cannot 
occur in connection with classical shocks, but they do occur for some \emph{undercompressive} shocks 
such as \emph{reversible phase boundaries}. This fact led to several developments that are out of 
purpose here. What we are concerned with now is to gain insight on the step beyond the local-in-time 
existence results `{\`a la Majda}' for propagating discontinuities. One way is to consider \emph{weakly 
nonlinear} asymptotics on longer time scales. Regarding surface waves associated with phase boundaries, 
this approach was started in \cite{BenzoniRosini}. Earlier studies were mostly concerning surface waves in 
elasticity \cite{Lardner,Parker,ParkerTalbot}. Research on {weakly nonlinear surface waves} in more general 
hyperbolic boundary value problems was launched by Hunter \cite{Hunter}. A general feature of weakly 
nonlinear surface waves is that they are governed by a (very) complicated, nonlocal \emph{amplitude equation}. 
More recently, the authors of the present paper investigated which properties of amplitude equations could 
be inferred from the fully nonlinear boundary value problem \cite{BenzoniCoulombel}. At about the same time, 
a then student of M\'etivier managed to rigorously justify, for dissipative boundary value problems, the asymptotic 
expansion in which the leading order term corresponds to weakly nonlinear surface waves \cite{Marcou}.

Here we focus on the properties of amplitude equations for \emph{variational problems}, first for abstract 
problems and then for phase boundaries. Roughly speaking, amplitude equations associated with surface 
waves in variational problems are found to be locally well-posed. The abstract part in \S~\ref{s:varpb} 
provides in particular a way of revisiting the case of elasticity that is much simpler than in \cite{BenzoniCoulombel} 
and also applies to more general energies, such as the Oseen--Frank energy for liquid crystals considered 
by Austria and Hunter \cite{AustriaHunter,Austria}. The more specific part \S~\ref{s:pt} closes the loop about 
phase boundaries, which do not fit the abstract framework of \S~\ref{s:varpb} and may nevertheless be 
viewed as a variational problem.

\section{Amplitude equations in abstract variational problems}\label{s:varpb}
\subsection{General framework}
This paper is concerned with non-stationary models arising from a variational principle.  The most basic ones are associated with space-time Lagrangians of the form
$$\Lag[\bu]:=\int_{0}^{T}\int_{\Omega} \left(\frac12 |\bu_t|^2-W(\bu,\nabla\bu)\right)\,\dif \bx\,\dif t\,,$$
where $\Omega$ is a smooth, multidimensional domain, $\bu$ is a vector valued unknown, $\bu_t$ denotes its partial derivative with respect to $t$, and $\nabla\bu$ denotes its spatial gradient. To be more specific about notations, if $\bu(\bx,t)\in \R^n$ for $(\bx,t)\in \overline{\Omega}\times [t_1,t_2]$, $\Omega\subset\R^d$, we denote by $(u_1,\ldots,u_n)$ the components of $\bu$, and 
the entries of the matrix valued function $\nabla \bu$ are denoted by
$$u_{\alpha,j}:=\partial_{x_j} u_\alpha\,,\quad \alpha\in\{1,\ldots.,n\}\,,\;j\in\{1,\ldots,d\}\,.$$
Our first assumption on the spatial energy density $W$ is that it smoothly depends on its arguments, and satisfies the identities
\begin{itemize}
\item[\Hun]$\qquad \dfrac{\partial W}{\partial \ual}(\bu,0)\,=\,0\,,\quad \forall \bu\in \R^n\,,\;\forall \alpha\in \{1,\ldots,n\}  \,,$
\item[\Hde]$\qquad \dfrac{\partial^2 W}{\partial \ual \partial \ubej}(\bu,0)\,=\,0\,,\quad \forall \bu\in \R^n\,, \;\forall \alpha, \beta\in \{1,\ldots,n\}\,,\;\forall j\in \{1,\ldots,d\} \,.$
\end{itemize}
The identities in \Hun\ and \Hde\ are satisfied in particular when $W$ depends quadratically on $\nabla\bu$. We ask \Hun\ so as to ensure that all \emph{uniform, constant} states $\ubu$ are \emph{critical points} of both the space-time Lagrangian $\Lag$ and the spatial energy $\En$ defined by
$$\En[\bu]:=\int_{\Omega} W(\bu,\nabla\bu)\,\dif \bx\,,$$
in the sense that the variational gradients of $\Lag$ and $\En$ vanish at $\ubu$.
Let us point out indeed that the variational gradient of $\Lag$ is 
$$\delta \Lag[\bu] = - \bu_{tt}-\delta \En[\bu]\,,$$
with, using Einstein's convention on summation over repeated indices,
$$(\delta \En[\bu])_{\alpha}=\,\dfrac{\partial W}{\partial \ual}(\bu,\nabla\bu)\,-\,\left(\dfrac{\partial W}{\partial \ualj}(\bu,\nabla\bu)\right)_{\!\!\!,\,j}\,,\;\forall \alpha\in \{1,\ldots,n\}\,.$$
Thanks to \Hun\ both $\delta \Lag[\bu]$ and $\delta \En[\bu]$ vanish when $\bu\equiv \ubu$ does not depend on $(\bx,t)$.
The reason for asking \Hde\ will be given afterwards.

The variational problem we are interested in concerns the more general critical points of $\Lag$ that satisfy `natural' boundary conditions associated with $\Lag$. This was precisely the kind of problem addressed by Austria \cite{Austria} in his thesis. If we consider `test functions' $\bh$ that vanish at times $t=0$ and $t=T$, but not necessarily at the boundary $\partial \Omega$ of $\Omega$, we see that 
$$\frac{\dif }{\dif \theta} \Lag[\bu+\theta\bh]_{|\theta=0}\,=\,\int_{0}^{T}\int_{\Omega} \delta \Lag[\bu]\cdot \bh\,+\,
\int_{t_1}^{t_2}\int_{\partial\Omega} \bN[\bu]\cdot \bh\,,$$
where
$$(\bN[\bu])_\alpha:=\,\nu_j\,\dfrac{\partial W}{\partial \ualj}(\bu,\nabla\bu)\,,\;\forall \alpha\in \{1,\ldots,n\}\,,$$
and $\bnu$ denotes the unit normal vector to $\partial\Omega$ that points \emph{inside}\footnote{This unusual choice is made for convenience, so as to avoid too many minus signs in calculations.} $\Omega$. Therefore, the directional derivative here above equals zero for all $\bh$ if and only if $\delta \Lag[\bu]=0$ \emph{and} $\bN[\bu]=0$.
This is the motivation for considering the nonlinear boundary value problem
$$ \NLBVP\qquad \left\{\begin{array}{ll}\bu_{tt}+\delta \En[\bu]=0& \mbox{in } \Omega\,,\\
\bN[\bu]=0 & \mbox{on } \partial\Omega\,.\end{array}\right.$$
One may notice that the addition of a \emph{null Lagrangian}, that is, a functional of identically zero variational derivative to $\En$ leaves invariant the interior equations in \NLBVP\ but changes the boundary conditions.
This is what happens for instance with the Oseen--Frank energy
$$W(\bu,\nabla\bu)=\frac12 \alpha (\nabla\cdot\bu)^2+ \frac12 \beta (\bu\cdot (\nabla \times \bu))^2 + \frac12 \gamma |\bu\times (\nabla\times \bu)|^2+\frac12 \eta (\tr(\nabla \bu)^2 - (\nabla\cdot \bu)^2)\,,$$
in which the last term corresponds to a null Lagrangian. Up to the addition of a Lagrange multiplier associated with the constraint $|\bu|=1$ to this energy, \NLBVP\ then corresponds to a model introduced by Saxton \cite{Saxton} and Al\`{\i} and Hunter \cite{AliHunter-crystals} for nematic liquid crystals. This specific boundary value problem and a simplified version of it were studied by Austria \cite{Austria,AustriaHunter}. Otherwise, a most famous model that fits the abstract setting in \NLBVP\ is given by the equations describing hyper-elastic materials with traction free boundary condition, on which there is abundant literature.
The main purpose of this work is to shed light on the weakly nonlinear surface waves associated with \NLBVP, under minimal assumptions on the energy $W$. By staying at an abstract level we can indeed avoid many technical details, and find out which properties of the weakly nonlinear surface wave equations are inherited from the fully nonlinear boundary value problem. This was already our point of view in our earlier paper \cite{BenzoniCoulombel}. Even though variational problems may be viewed as special cases of the Hamiltonian problems considered in \cite[\S2]{BenzoniCoulombel}, the present study is at the same time simpler and more general in terms of the assumptions on the energy $W$ - for instance the Oseen--Frank energy satisfies \Hun\ and \Hde\ but not the more stringent assumptions made in \cite{BenzoniCoulombel}.

As already observed, \Hun\ ensures that uniform constant states $\ubu$ automatically satisfy the interior equations in \NLBVP. This is also true for the boundary conditions when $W$ depends quadratically on $\nabla\bu$, but for more general energies $W$  we can have $\bN[\ubu]\neq 0$. 

\subsection{Linear surface waves}
From now on, we \emph{assume} that $\ubu$ is such that $\bN[\ubu]= 0$, so that $\ubu$ solves \NLBVP.
Then small perturbations about $\ubu$ are expected to be governed by the \emph{linearized} problem
$$ \LBVP\qquad \left\{\begin{array}{ll}\bv_{tt}+ \bP \bv = 0& \mbox{in } \Omega\,,\\
\bB \bv=0 & \mbox{on } \partial\Omega\,,\end{array}\right.$$
where
$\bP:=\delta^2 \En[\ubu]$ and $\bB$ is the vector valued operator whose components $\bB_\alpha$ are defined by differentiating $(\bN[\bu])_\alpha$ at $\ubu$, which gives
$$\bB_\alpha \bv:=\nu_j\,v_\gamma\,\dfrac{\partial^2 W}{\partial \ualj \partial \uga}(\ubu,0)\,+\,\nu_j\,v_{\beta,\ell}\,\dfrac{\partial^2 W}{\partial \ualj \partial \ubel}(\ubu,0)\,.$$
This is where the assumption \Hde\ comes in. Indeed, we are interested in boundary value problems that are \emph{scale invariant}. More precisely, we would like \LBVP\ to be invariant with respect to any rescaling of the type
$(\bx,t,\bv)\mapsto (k\bx,kt,\bv)$, $k\neq 0$. Of course, the first requirement is that the domain $\Omega$ be scale invariant. 

From now on, $\Omega$ will implicitly be assumed to be a \emph{half-space}\footnote{The reader may think of $\Omega$ as $\{\bx\,;\;x_d>0\}$, so that $\bnu=(0,\ldots,0,1)$, but we prefer keeping the notations $\nu_j$ for the components of $\bnu$ in the calculations, for symmetry reasons.}. Regarding the interior equations in \LBVP, \Hun\ and a weakened version of \Hde\ would be sufficient to ensure scale invariance. As a matter of fact, the general expression for the differential operator $\delta^2 \En[\bu]$ is given by
$$(\delta^2 \En[\bu] \bv)_{\alpha} = \begin{array}[t]{l}\vbe\,\dfrac{\partial^2 W}{\partial \ual \partial \ube}(\bu,\nabla\bu)\,+\,
\vbej\,\dfrac{\partial^2 W}{\partial \ual \partial \ubej}(\bu,\nabla\bu) \\ [10pt]
-\,\left(
\vbe\,\dfrac{\partial^2 W}{\partial \ualj \partial \ube}(\bu,\nabla\bu)\,+\,
\vbel\,\dfrac{\partial^2 W}{\partial \ualj \partial \ubel}(\bu,\nabla\bu)\right)_{\!\!\!,\,j}
\,.\end{array}$$
For $\bu\equiv \ubu$ the zeroth order terms in $\delta^2 \En[\ubu]$ vanish because of \Hun , while the first order ones cancel out as soon as we have the symmetry
$$\dfrac{\partial^2 W}{\partial \ual \partial \ubej}(\ubu,0)\,=\,\dfrac{\partial^2 W}{\partial \ualj \partial \ube}(\ubu,0)\,,\quad \forall \alpha, \beta\in \{1,\ldots,n\}\,,\;\forall j\in \{1,\ldots,d\} \,.$$
We do need the stronger assumption that these derivatives are equal to zero for the boundary operator $\bB$ to be a homogeneous, first order operator. This is why we assume \Hde.
Introducing the convenient notations
$$\caljbel:=\dfrac{\partial^2 W}{\partial \ualj \partial \ubel}(\ubu,0) \,,$$
we see that under \Hun\ and \Hde\ the operators $\bP$ and $\bB$ reduce to
$$(\bP \bv)_{\alpha} = - \caljbel\,v_{\beta,\ell j}\,,\qquad (\bB \bv)_{\alpha} = \nu_j \caljbel\,v_{\beta,\ell}\,,$$
where 
$$v_{\beta,\ell j}:=\partial_{x_j} \partial_{x_\ell} v_\beta\,.$$
Remarkably enough, \LBVP\ then exactly falls within the framework considered by Serre in \cite{Serre-JFA}, up to introducing the reduced, quadratic energy density $\uW$ defined by
$$\uW(\nabla \bv):= \,\frac 12 \,\caljbel\, \valj\, \vbel\,,$$
and assuming that it is \emph{strictly rank-one convex}. This is our next assumption, which ensures that the Cauchy problem for the system $\bv_{tt}+ \bP \bv = 0$ in $\R^d$ is well-posed, whatever the chosen reference state $\ubu$.
\begin{itemize}
\item[\Htr]$\qquad v_\alpha\,\xi_j\,v_\beta\,\xi_\ell\dfrac{\partial^2 W}{\partial \ualj \partial \ubel}(\bu,0)\,>\,0\,,\quad \forall \bu\in \R^n\,, \;\forall \bv\in \R^{n}\backslash\{0\}\,,\;\forall \bxi \in \R^{d} \backslash\{0\}\,.$
\end{itemize}
About the Cauchy problem associated with \LBVP, one may summarize Serre's findings as follows.
\begin{theorem}[Serre \cite{Serre-JFA}]\label{thm:Serre}
Under assumptions \Hun-\Hde-\Htr, the Cauchy problem associated with \LBVP\ is always \emph{strongly well-posed} in one space dimension ($d=1$), and in arbitrary space dimensions, it is strongly well-posed in $\dot{H}^1(\Omega)$ if and only if the global energy
$$\int_{\Omega} \uW(\nabla \bv)\, \dif \bx$$
is \emph{convex} and \emph{coercive} on $\dot{H}^1(\Omega)$. If this is the case, then for all for all $\bfeta\neq 0$ in an open subset of the cotangent space to $\partial\Omega$, there exists $\tau\in \R$, $\tau\neq 0$, such that (LBVP) admits nontrivial solutions of the form 
$$\bv(\bx,t)=\,\ee^{i(\tau t + \bfeta\cdot\bx)}\,\bV(\bnu\cdot \bx)\,,\; \bV\in L^2(\R^+)\,.$$
The time frequency $\tau$ depends on the wave vector $\bfeta$ and solves the equation $\Delta(\tau,\bfeta)=0$, where 
$\Delta$ is the \emph{Lopatinskii determinant} associated with \LBVP. In addition, if the space of surface waves associated with $(\tau,\bfeta)$ is \emph{one-dimensional} then $\tau$ is a \emph{simple} root of $\Delta(\cdot,\bfeta)$, that is, $\partial_\tau\Delta(\tau,\bfeta)\neq 0$. Finally, the surface wave profile $\bV$ solves an ODE
$\bV_z=\bS(\tau,\bfeta)\bV$, where the $n\times n$ matrix $\bS(\tau,\bfeta)$ is \emph{stable}, in the sense that its eigenvalues are of negative real part.
\end{theorem}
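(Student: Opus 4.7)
The plan is to establish well-posedness of the Cauchy problem via conservation of the natural energy, and to produce surface-wave profiles by a spectral reduction to an ODE in the normal variable combined with a variational characterization of the time frequency.

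For the Cauchy problem, the natural conserved energy is
\[
\mathcal{E}[\bv](t) = \frac12 \int_\Omega \left(|\bv_t|^2 + \caljbel\, v_{\alpha,j}\, v_{\beta,\ell}\right)\,\dif \bx .
\]
In one space dimension, \Htr\ makes the quadratic form $c_{\alpha 1\beta 1}\xi_\alpha\xi_\beta$ positive-definite, so $\mathcal{E}$ controls the $H^1$ norm of $\bv$ and strong well-posedness follows by a standard semigroup argument. In several space dimensions, rank-one convexity alone does not suffice: $\mathcal{E}$ controls $\|\bv\|_{\dot H^1}$ uniformly if and only if the quadratic form $\int\uW(\nabla\bv)\,\dif\bx$ is both convex and coercive on $\dot H^1(\Omega)$, which is exactly the condition stated.

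For surface waves, I would take coordinates with $\bnu=\mathbf{e}_d$ and $\bfeta$ tangential, then seek solutions $\bv=\ee^{i(\tau t+\bfeta\cdot\bx)}\bV(z)$ with $z=x_d\geq 0$. Substitution into \LBVP\ yields a second-order ODE for $\bV$ of the form $A\bV''+iB\bV'-(C-\tau^2 I)\bV=0$, where $A_{\alpha\beta}=c_{\alpha d\beta d}$ and $B,C$ are built from $\caljbel$ and $\bfeta$. By \Htr, $A$ is positive-definite hence invertible, so the system rewrites as a first-order $2n\times 2n$ system $\bW'=M(\tau,\bfeta)\bW$. In the hyperbolic gap---the $(\tau,\bfeta)$ for which $M$ has no purely imaginary eigenvalue---the variational origin of \LBVP\ endows $M$ with a Hamiltonian symmetry, whence its stable subspace $\mathbf{E}^s(\tau,\bfeta)$ is Lagrangian, and in particular of dimension exactly $n$. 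The Lopatinskii determinant $\Delta(\tau,\bfeta)$ is defined as the determinant of the boundary operator $\bB$ restricted to $\mathbf{E}^s(\tau,\bfeta)$, and an $L^2(\R^+)$ surface-wave profile exists precisely when $\Delta(\tau,\bfeta)=0$.

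The main obstacle is producing actual zeros of $\Delta$ on an open subset of wave vectors. Following Serre, the strategy is variational: take the partial Fourier transform $\widehat{\bP}(\bfeta)$ of $\bP$ in $\bx_\parallel$, acting on $\bV\in H^1(\R^+;\C^n)$ with the natural boundary condition, and minimize its Rayleigh quotient $\tau^2=\langle \widehat{\bP}(\bfeta)\bV,\bV\rangle/\|\bV\|_{L^2}^2$. Convexity and coercivity bound this quotient from below and guarantee positivity (so $\tau\neq 0$); whenever the infimum falls \emph{strictly} below the essential spectrum---namely $\min_{\xi_d\in\R,\,|\bv|=1}\caljbel(\eta_j+\xi_d\nu_j)(\eta_\ell+\xi_d\nu_\ell)v_\alpha v_\beta$---it is attained by an $L^2(\R^+)$ minimizer, which is precisely a surface-wave profile. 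The strict inequality is established on an open subset of $\bfeta$ by exhibiting a suitable trial function adapted to the boundary; this is the genuinely delicate step, since in several dimensions the gap may close for some directions. Simplicity of the root $\tau$ when the surface-wave eigenspace is one-dimensional then follows from a Hellmann--Feynman computation giving $\partial_\tau\Delta(\tau,\bfeta)$ proportional to $\|\bV\|_{L^2}^2\neq 0$, while stability of $\bS(\tau,\bfeta)$ is automatic since, by construction, $\bS$ represents the restriction $M|_{\mathbf{E}^s}$, whose eigenvalues all have negative real part.
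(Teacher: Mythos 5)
The first thing to note is that the paper does not prove this statement at all: it is quoted from Serre, and the text only records that it follows from Theorems 3.1, 3.3, 3.5 and Proposition 4.1 of \cite{Serre-JFA}. Your outline is broadly faithful to Serre's variational strategy (energy identity for well-posedness, reduction to a first-order system in the normal variable, Lopatinskii determinant on the stable subspace, Rayleigh-quotient characterization of $\tau^2$), so the real comparison is between your sketch and Serre's arguments, and as a proof your sketch has a genuine gap precisely where the substance of the theorem lies.

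Namely, the existence of a nonzero $\tau$ with $\Delta(\tau,\bfeta)=0$ for all $\bfeta\neq 0$ in an open set of directions is not established. You correctly reduce it to showing that the infimum of the Rayleigh quotient of $\widehat{\bP}(\bfeta)$ on the half-line, with the natural boundary condition, falls strictly below the bottom of the essential spectrum, but you then declare that this strict inequality ``is established by exhibiting a suitable trial function adapted to the boundary; this is the genuinely delicate step'' without exhibiting any such trial function or explaining why the gap opens on an open set of $\bfeta$. That step is exactly the content of Serre's existence result; without it the central claim of the theorem is unproven. Two further points are asserted rather than argued: the ``only if'' half of the equivalence between strong well-posedness in $\dot H^1(\Omega)$ and convexity plus coercivity of $\int_\Omega \uW(\nabla\bv)\,\dif\bx$ (one must produce Hadamard-type exploding modes when convexity or coercivity fails, which does not follow from the energy identity you invoke), and the passage from your $2n\times 2n$ first-order system restricted to its $n$-dimensional stable subspace to an $n\times n$ ODE $\bV_z=\bS(\tau,\bfeta)\bV$ for the profile alone, which requires the stable subspace to be a graph over the $\bV$-component (true in the elliptic frequency region, but needing justification). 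The one-dimensional well-posedness argument and the Hellmann--Feynman computation giving $\partial_\tau\Delta(\tau,\bfeta)\neq 0$ when the surface-wave space is one-dimensional are fine in outline and consistent with Serre's Proposition 4.1.
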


The results stated in Theorem \ref{thm:Serre} follow from Theorems 3.1, 3.3, 3.5, and Proposition 4.1 in \cite{Serre-JFA}.
Roughly speaking, they mean that if (LBVP) does not admit any `exploding' mode solution then it admits \emph{surface waves}, which propagate with speed $\tau/|\bfeta|$ in `generic' directions $\bfeta$ along the boundary $\partial\Omega$, and decay to zero away from the boundary. They even decay exponentially fast, that is, the square integrable functions $\bV$ decay exponentially fast at infinity since they are of the form $\bV(z)=\ee^{z \bS(\tau,\bfeta)}\bV(0)$ with $\bS(\tau,\bfeta)$ a stable matrix, which amounts to the fact that the zeroes of $\Delta$ lie in the so-called \emph{elliptic} frequency domain.

\subsection{Weakly nonlinear asymptotics}

Once we have linear surface waves, it is natural to try and understand the influence of nonlinearities on their evolution. In this respect, we look for solutions of \NLBVP\ admitting a (formal) weakly nonlinear expansion 
$$\bu(\bx,t)=\ubu\,+\,\eps\,\bv(\tau t + \bfeta\cdot\bx,\bnu\cdot \bx,\eps t)\, + \,\eps^2\,\bw(\tau t + \bfeta\cdot\bx,\bnu\cdot \bx,\eps t)\,+\,{\mathcal O}(\eps^3)\,,$$
where $\tau$ and $\bfeta$ are of course related by $\Delta(\tau,\bfeta)=0$, and
$\bv=\bv(y,z,s)$ and $\bw=\bw(y,z,s)$ are supposed to be bounded as well as their derivatives in the tangential variable $y\in \R$ and the slow time $s$, and square integrable in the transverse variable $z\in \R^+$.
By plugging this expansion into \NLBVP\ we see that for all $s$ the first order profile $\bv(\cdot,\cdot,s)$ must be solution to 
$$ \one
\qquad \left\{\begin{array}{ll}\tau^2 \bv_{yy}+ \bP^\bfeta \bv = 0\,,& z>0\,,\\
\bB^\bfeta \bv=0\,, &z=0\,,\end{array}\right.$$
where the operators $\bP^\bfeta$ and $\bB^\bfeta$ are obtained from the operators $\bP$ and $\bB$ involved in \LBVP\ merely by replacing each derivative $\partial_{x_j}$ by 
$\nu_j \partial_z+\eta_j \partial_y$. Linear surface waves yield special solutions of \one\ of the form
$$\bv(y,z)=\,\ee^{i y}\,\bV(z)\,.$$ 
More generally, we can find all the solutions of \one\ by Fourier transform in $y$, under the following assumption.
\begin{itemize}
\item[\Hq]The pair $(\tau,\bfeta)\in \R^d$, with $\tau\neq 0$ and $\bfeta$ cotangent to $\partial\Omega$, is such that there are no \emph{normal mode} solutions to $\tau^2 \bv_{yy}+ \bP^\bfeta \bv = 0$ of the form $\bv(y,z)=\,\ee^{i y}\,\ee^{i \xi z} \,\ubV$ with $\xi\in \R$, $\ubV\neq 0$, and the space of solutions to \one\ of the form $\bv(y,z)=\,\ee^{i y}\,\bV(z)$ with $\bV\in L^2(\R^+)$ is one-dimensional.
\end{itemize}

In other words, \Hq\ asks that $(\tau,\bfeta)\in \R^d$ be associated with a line, and not a greater space, of surface waves. 

\begin{lemma}\label{lem:solvone}
Under assumptions \Hun-\Hde-\Hq, the space of square integrable, real-valued solutions to \one\ is made of functions of the form $w*_y\br$, where $\br$ is defined by its $y$-Fourier transform
$$\hbr(k,z) = \left\{\begin{array}{ll}\bV(kz)\,,&\;k>0\,,\\
\overline{\bV(-kz)}\,,&\;k<0\,,\end{array}\right.$$
for all $z>0$, with $\bV\in L^{2}(\R^+)$  such that $\bv(y,z)=\,\ee^{i y}\,\bV(z)$ is a fixed, nontrivial linear surface wave solution to \one.
\end{lemma}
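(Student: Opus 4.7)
The plan is to reduce \one\ to a one-parameter family of ODE boundary value problems in $z$ via partial Fourier transform in $y$, and then to exploit the scale invariance of \one\ in order to identify each of these problems, for $k\neq 0$, with the surface wave problem on which \Hq\ furnishes information.

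Concretely, let $\hbv(k,z)$ denote the Fourier transform in $y$ of $\bv(y,z)$. Since $\bP^\bfeta$ and $\bB^\bfeta$ involve only $\partial_y$ and $\partial_z$, under $\partial_y\mapsto ik$ the problem \one\ becomes, for each $k\in\R$,
$$-\tau^2 k^2\,\hbv\,-\,\caljbel\,(\nu_j\partial_z + ik\eta_j)(\nu_\ell\partial_z + ik\eta_\ell)\,\hv_\beta \,=\, 0\quad(z>0)\,,$$
together with $\nu_j\caljbel(\nu_\ell\partial_z + ik\eta_\ell)\hv_\beta|_{z=0}=0$, to be solved for $\hbv(k,\cdot)\in L^2(\R^+)$. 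The case $k=0$ yields only the trivial solution, since the residual equation $\caljbel\nu_j\nu_\ell\,\partial_z^2\hv_\beta=0$ is elliptic by \Htr\ and, combined with the boundary condition, forces affine and hence vanishing $L^2$-solutions.

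For $k>0$ I substitute $Z:=kz$ and $\tbv(Z):=\hbv(k,z)$; since $\partial_z=k\partial_Z$, both the interior equation and the boundary condition rescale, after dividing by $k^2$ and $k$ respectively, into exactly the ODE system satisfied by a surface wave profile at frequency $1$. Assumption \Hq\ then ensures that the $L^2(\R^+)$-solution space of that system is one-dimensional, spanned by $\bV$, so $\tbv=\hw(k)\,\bV$ for some scalar $\hw(k)$, i.e.\ $\hbv(k,z)=\hw(k)\,\bV(kz)$. For $k<0$, the analogous rescaling $Z:=-kz$ produces the complex-conjugate system, whose $L^2$-solutions are spanned by $\overline{\bV}$; hence $\hbv(k,z)=\hw(k)\,\overline{\bV(-kz)}$ for some scalar $\hw(k)$. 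Finally, reality of $\bv$ is equivalent to $\hw(-k)=\overline{\hw(k)}$, which amounts to $\hw$ being the Fourier transform of a real function $w$; reading the relation $\hbv=\hw\cdot\hbr$ back in physical space then yields the announced identity $\bv=w*_y\br$.

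The heart of the argument is the scale-invariance computation that collapses the whole $k$-family onto the single problem handled by \Hq; once it is carried out, the one-dimensional reduction supplied by \Hq\ does the rest, and the remainder of the proof is bookkeeping.
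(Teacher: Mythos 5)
Your proposal is correct and follows essentially the same route as the paper: Fourier transform in $y$, a scaling argument collapsing the fixed-$k$ boundary value problem onto the frequency-one surface wave problem whose solution space is one-dimensional by \Hq, conjugation for $k<0$, and the reality condition $\hw(-k)=\overline{\hw(k)}$ to conclude $\bv=w*_y\br$. The only (immaterial) differences are that you perform the rescaling $Z=kz$ directly on the Fourier-side ODE rather than invoking the invariance of \LBVP\ under $(\bx,t,\bv)\mapsto(k\bx,kt,\bv)$ as the paper does, and that you dispose of the $k=0$ fiber explicitly (using the standing assumption \Htr), which the paper can safely omit since a single frequency has zero measure.
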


\begin{proof}
By Fourier transform in $y$, if we denote by $k$ the dual variable to $y$, \one\ is equivalent to 
$$ \hone
\qquad \left\{\begin{array}{ll}k^2\tau^2 \hbv = \bL^{k\bfeta} \hbv\,,& z>0\,,\\
\bC^{k\bfeta} \hbv=0\,, & z=0\,,\end{array}\right.$$
where the operators $\bL^{k\bfeta}$ and $\bC^{k\bfeta}$ are obtained respectively from $\bP^{\bfeta}$ and $\bB^{\bfeta}$ by substituting $ik$ for $\partial_y$. More explicitly, they are defined by
$$(\bL^{k\bfeta} \bv)_{\alpha} = - \caljbel\,(\nu_j\partial_z+ik\eta_j)(\nu_\ell\partial_z+ik\eta_\ell) (v_{\beta})\,,\quad (\bC^{k\bfeta} \bv)_{\alpha} = \nu_j \caljbel\,(\nu_\ell\partial_z+ik\eta_\ell) (v_{\beta})\,.$$

Because of \Hun-\Hde, \LBVP\ is invariant by the rescaling 
$(\bx,t,\bv)\mapsto (k\bx,kt,\bv)$ for all $k> 0$. Since \one\ is obtained from \LBVP\ by setting 
$y=\tau t+ \bfeta\cdot \bx$, $z=\bnu\cdot\bx$,
this implies that $\bv=\bv(y,z)$ is solution to \one\ if and only if $\tbv=\bv(y,kz)$ is solution to
$$ \tone
\qquad \left\{\begin{array}{ll}k^2\tau^2 \tbv_{yy}+ \bP^{k\bfeta} \tbv = 0\,,& {z}>0\,,\\
\bB^{k\bfeta} \tbv=0\,, &{z}=0\,.\end{array}\right.$$
In particular, $\bv=\ee^{iy}\,\bV(z)$ is solution to \one\ if and only if $\tbv=\ee^{iy}\,\bV(kz)$ is solution to
$$  \left\{\begin{array}{ll}k^2\tau^2 \tbv = \bL^{k\bfeta} \tbv \,,& {z}>0\,,\\
\bC^{k\bfeta} \tbv=0\,, &{z}=0\,.\end{array}\right.$$
Substituting the notation $\tbv$ for $\hbv$, this is exactly $\hone$ at fixed $k$. The latter thus has 
a one-dimensional space of solutions, since this is the case for the solutions of the form $\bv=\ee^{iy}\,\bV(z)$ of \one, by \Hq. To make this more precise, let us denote 
by $\bv_0(y,z)=\,\ee^{i y}\,\bV_0(z)$ a nontrivial linear surface wave solution to \one, using temporarily the subscript $0$ to avoid confusion with other solutions to \one.
Then, for any solution to \one, for all $k>0$, there must exist a scalar $\widehat{w}(k)$ such that $\hbv(k,z)= \widehat{w}(k) \bV_0(kz)$. Furthermore, in order $\bv$ to be real-valued, we must have
$\hbv(k,z)=\overline{\hbv(-k,z)}$ for all $k<0$. 

To conclude, we remove the subscript $0$ from $\bV_0$, and define $\br$ as claimed. 
By complex conjugation we see that for any solution to $\hone$, $\bh=\overline{\hbv}$ satisfies
$$\honeadj \qquad \left\{\begin{array}{ll}k^2\tau^2\, \bh = \bL^{-k\bfeta}\, \bh\,,& z>0\,,\\
\bC^{-k\bfeta} \,\bh=0\,, & z=0\,.\end{array}\right.$$
In particular, this implies that $\br$ solves $\hone$ for all $k\neq0$ - and not only for $k>0$.
Then all square integrable, real-valued solutions $\bv$ to \one\ are such that $\hbv(k,z)= \widehat{w}(k) \hbr(k,z)$, for all $k\neq 0$ and all $z>0$.
We conclude by inverse Fourier transform.
\end{proof}

Note that for all $z>0$, $\hbr(k,z)$ is exponentially decaying when $k\to \infty$, since this is the case for $\bV(z)$ when $z\to +\infty$, and that $\hbr(k,z)$ is as smooth in $k$ as $\bV$ in $z$, except at $k=0$.
More importantly here, the fact that $\hbr(-k,z)=\overline{\hbr(k,z)}$ is solution to $\honeadj$
is crucial for the symmetry properties of the amplitude equation studied below.

Recalling that the first order profile $\bv$ in the asymptotic expansion of $\bu$ must solve \one\ and is allowed to depend on the slow time $s$, Lemma \ref{lem:solvone} shows that its general form is $\bv(\cdot,z,s)=w(\cdot,s)*\br(\cdot,z)$. Now, by plugging the expansion in \NLBVP\ we find that the second order profile $\bw$ must solve
$$ \two
\qquad \left\{\begin{array}{ll}\tau \bv_{ys}\,+\,\tau^2 \bw_{yy}+ \bP^\bfeta \bw + \frac12  \bQ^\bfeta[\bv] = 0\,,& z>0\,,
\\[10pt]
\bB^\bfeta \bw + \frac12 \bM^\bfeta[\bv] =0\,, &z=0\,,\end{array}\right.$$
where the quadratic operators $\bQ^\bfeta$ and $\bM^\bfeta$ are obtained by differentiating twice $\delta\En[\bu]$ and $\bN[\bu]$ respectively, which yields the operators $\bQ$ and $\bM$ detailed below,
and by replacing each derivative $\partial_{x_j}$ by 
$\nu_j \partial_z+\eta_j \partial_y$.
In order to write explicitly $\bQ$ and $\bM$ in a rather short way, let us introduce a few more notations, for the third order derivatives of $W$ that do not automatically vanish under the assumptions \Hun-\Hde,
$$\ealbejgal:=\dfrac{\partial^3 W}{\partial \ual \partial \ubej \ugal}(\ubu,0) \,,\;\daljbelgam:=\dfrac{\partial^3 W}{\partial \ualj \partial \ubel \ugam}(\ubu,0)\,.$$
Then we have, under \Hun-\Hde,
\begin{equation}\label{eq:defQM}
\begin{array}{r}
(\bQ[\bv])_\alpha\,=\,\ealbejgal\, \vbej\, \vgal\,-\,(\ebealjgal\,\vbe\,\vgal\,+\,\egaaljbel\,\vbel\,\vga\,+\,\daljbelgam\,\vbel\,\vgam)_{\!\!\!,\,j}\,,\\ [5pt]
(\bM[\bv])_\alpha\,=\,(\,\ebealjgal\,\vbe\,\vgal\,+\,\egaaljbel\,\vbel\,\vga\,+\,+\,\daljbelgam\,\vbel\,\vgam)\,\nu_j\,.
\end{array}
\end{equation}
(We could of course notice that $\ebealjgal\,\vbe\,\vgal\,+\,\egaaljbel\,\vbel\,\vga=2 \ebealjgal\,\vbe\,\vgal$, but it is more convenient, for symmetry reasons, to keep these two sums.)

\subsection{Derivation of amplitude equations}\label{ss:deramp}

\begin{theorem}
\label{mainthm} We assume that \Hun-\Hde-\Htr-\Hq\ hold true, and introduce $\hbr$ as in Lemma \ref{lem:solvone}.
For \two\ to have a square integrable solution $\bw$ the amplitude $\widehat{w}$ must solve the quadratic, nonlocal equation
$$a(k) \,\hw_s(k,s)\,+\,\int_{\R}\,b(-k,k-\xi,\xi)\, \hw(k-\xi,s)\,\hw(\xi,s)\,\dif \xi\,=\,0\,,$$
with 
$$a(k):=i\,c\,\sgn(k)\,,\;c:=\,\textstyle\tau\,\int_{0}^{+\infty} |\hbr(1,\zeta)|^2\,\dif \zeta\,,\quad \forall \;k\neq 0\,,$$
$$b(\xi_1,\xi_2,\xi_3)=b_1(\xi_1,\xi_2,\xi_3)+b_2(\xi_1,\xi_2,\xi_3)\,,$$
$$4\pi\,b_2(\xi_1,\xi_2,\xi_3):=$$
$$\textstyle\int_{0}^{+\infty} \daljbelgam\,(\nu_j\rho_{\alpha,z}+i\xi_1\eta_j\rho_\alpha)\,(\nu_\ell\rho_{\beta,z}+i\xi_2\eta_\ell\rho_\beta)\,(\nu_m\rho_{\gamma,z}+i\xi_3\eta_m\rho_\gamma)\dif z\,,$$
$$4\pi\,b_1(\xi_1,\xi_2,\xi_3):=$$
$$\textstyle\int_{0}^{+\infty} \ealbejgal (\nu_j \nu_\ell \rho_\alpha \rho_{\beta,z} \rho_{\gamma,z} +  i \xi_2 \eta_j \nu_\ell \rho_\alpha \rho_\beta \rho_{\gamma,z} +  i \xi_3 \eta_\ell \nu_j \rho_\alpha \rho_{\beta,z} \rho_\gamma -  \xi_2 \xi_3 \eta_j \eta_\ell \rho_\alpha \rho_\beta \rho_\gamma)\dif z\,+\,$$
$$\textstyle\int_{0}^{+\infty} \ebealjgal (\nu_j \nu_\ell \rho_{\alpha,z} \rho_{\beta} \rho_{\gamma,z} + i\xi_1 \eta_j \nu_\ell \rho_\alpha \rho_\beta \rho_{\gamma,z} +  i \xi_3 \eta_\ell \nu_j \rho_{\alpha,z} \rho_{\beta} \rho_\gamma - \xi_1 \xi_3 \eta_j \eta_\ell \rho_\alpha \rho_\beta \rho_\gamma)\dif z\,+\,$$
$$\textstyle\int_{0}^{+\infty} \egaaljbel (\nu_j \nu_\ell \rho_{\alpha,z} \rho_{\beta,z} \rho_{\gamma} + i\xi_1 \eta_j \nu_\ell \rho_\alpha \rho_{\beta,z} \rho_{\gamma} + i \xi_2 \eta_\ell \nu_j \rho_{\alpha,z} \rho_{\beta} \rho_\gamma - \xi_2 \xi_1 \eta_j \eta_\ell \rho_\alpha \rho_\beta \rho_\gamma)\dif z\,,$$
for $\xi_1\xi_2\xi_3\neq 0$, where we have used the shortcuts
$$\rho_\alpha:=\widehat{r}_\alpha(\xi_1,z)\,,\;\rho_\beta:=\widehat{r}_\beta(\xi_2,z)\,,\;\rho_\gamma:=\widehat{r}_\gamma(\xi_3,z)\,.$$
$$\rho_{\alpha,z}:=\partial_z\widehat{r}_\alpha(\xi_1,z)\,,\;\rho_{\beta,z}:=\partial_z\widehat{r}_\beta(\xi_2,z)\,,\;\rho_{\gamma,z}:=\partial_z\widehat{r}_\gamma(\xi_3,z)\,.$$
In particular, we have $$a(-k)=\overline{a(k)}\neq 0\,,\;\forall k\neq 0\,,$$
$$b(-\xi_1,-\xi_2,-\xi_3)=\overline{b(\xi_1,\xi_2,\xi_3)}\,,\;\forall (\xi_1,\xi_2,\xi_3)\,,\;\xi_1\xi_2\xi_3\neq 0\,,$$
and $b$ is symmetric - that is, $b(\xi_1,\xi_2,\xi_3)$ is invariant under all permutations of $\{\xi_1,\xi_2,\xi_3\}$.
Furthermore, under the additional assumption that the matrix $\bS(\tau,\bfeta)$ from Theorem \ref{thm:Serre} has no Jordan blocks, the part $b_1$ of $b$ is positively homogeneous degree one, while 
$b_2$ is positively homogeneous degree two.
\end{theorem}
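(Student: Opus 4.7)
The approach is a Fredholm-type solvability argument, carried out after a Fourier transform of \two\ in the tangential variable $y$. Inserting the ansatz $\hbv(k,z,s)=\hw(k,s)\,\hbr(k,z)$ from Lemma~\ref{lem:solvone} turns \two\ into an inhomogeneous BVP for $\hbw(k,\cdot,s)$ of the form
\begin{equation*}
\left\{\begin{array}{ll}(\bL^{k\bfeta}-k^2\tau^2)\hbw\,=\,-\,ik\tau\,\hw_s(k,s)\,\hbr(k,\cdot)\,-\,\tfrac12\,\widehat{\bQ^\bfeta[\bv]}(k,\cdot)\,,& z>0\,,\\[4pt]
\bC^{k\bfeta}\hbw\,=\,-\,\tfrac12\,\widehat{\bM^\bfeta[\bv]}(k)\,,& z=0\,.\end{array}\right.
\end{equation*}
By \Hq\ the homogeneous version is exactly \hone, whose $L^2(\R^+)$-kernel is the one-dimensional span of $\hbr(k,\cdot)$, and the observation made just after Lemma~\ref{lem:solvone} that $\overline{\hbr(k,\cdot)}=\hbr(-k,\cdot)$ solves $\honeadj$ shows that $\overline{\hbr(k,\cdot)}$ spans the adjoint kernel. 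The amplitude equation will be derived as the associated solvability condition: multiplying the interior equation by $\overline{\hbr(k,z)}$, integrating in $z>0$ and integrating by parts twice on $\bL^{k\bfeta}$, the $\hbw$-terms cancel by the adjoint identity and two boundary contributions at $z=0$ survive. One of them vanishes because $\bC^{-k\bfeta}\overline{\hbr(k,0)}=0$ by $\honeadj$; the other combines with the BC for $\hbw$ to produce a term proportional to $\overline{\hbr(k,0)}\cdot\widehat{\bM^\bfeta[\bv]}(k)$.

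To bring the identity into the announced form, I would then integrate by parts in $z$ the three divergence-form pieces of $\bQ^\bfeta[\bv]$ appearing in $\int_0^\infty\widehat{\bQ^\bfeta[\bv]}\cdot\overline{\hbr(k,\cdot)}\,\dif z$, using the explicit expressions \eqref{eq:defQM}. This produces an additional boundary contribution at $z=0$ equal to $\widehat{\bM^\bfeta[\bv]}(k)\cdot\overline{\hbr(k,0)}$ --- which is exactly what $\bM$ was constructed to represent via the divergence theorem --- and it cancels the $\bM$-contribution produced by the $\bL^{k\bfeta}$ integration by parts. What remains is a purely interior cubic identity whose $y$-Fourier convolution structure is the announced equation. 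The coefficient of $\hw_s$ is $ik\tau\int_0^\infty|\hbr(k,z)|^2\,\dif z$; using the scaling relations $\hbr(k,z)=\bV(kz)$ for $k>0$ and $\hbr(k,z)=\overline{\bV(-kz)}$ for $k<0$, one has $\int_0^\infty|\hbr(k,z)|^2\,\dif z=|k|^{-1}\int_0^\infty|\hbr(1,\zeta)|^2\,\dif\zeta$, so $a(k)=ic\,\sgn(k)$ as claimed. The kernel $b$ is then assembled term by term from the four cubic groups in \eqref{eq:defQM}: the non-divergence $\ealbejgal$-group directly yields the first line of $4\pi b_1$ (with $\overline{\hbr_\alpha(k,\cdot)}=\hbr_\alpha(\xi_1,\cdot)$ sitting undifferentiated), while the three divergence-form groups $\ebealjgal$, $\egaaljbel$, $\daljbelgam$ produce the remaining two lines of $4\pi b_1$ and the entire $4\pi b_2$ after the IBP moves one $\partial_z$ onto the $\overline{\hbr_\alpha(k,\cdot)}$ factor.

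The formal identities $a(-k)=\overline{a(k)}$ and $b(-\xi_1,-\xi_2,-\xi_3)=\overline{b(\xi_1,\xi_2,\xi_3)}$ are then immediate, the first from $\sgn(-k)=-\sgn(k)$ and the reality of $c$, the second from the reality of the coefficients $\ealbejgal$ and $\daljbelgam$ combined with $\hbr(-\xi,z)=\overline{\hbr(\xi,z)}$. The permutation symmetry of $b$ under arbitrary permutations of $(\xi_1,\xi_2,\xi_3)$ is where the deliberate splitting of the $e$-contributions into three separate pieces pays off: the total symmetry of the third derivatives of $W$ in the three pairs $(\alpha,j),(\beta,\ell),(\gamma,m)$ --- inherited from the commutation of partial derivatives --- matches the three possible placements of the undifferentiated factor (respectively, of the three differentiated factors) among $\hbr(\xi_1,\cdot),\hbr(\xi_2,\cdot),\hbr(\xi_3,\cdot)$, so that the four cubic pieces of $b$ permute into each other under arbitrary permutations of the three $\xi$-arguments.

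Finally, under the no-Jordan-block assumption on $\bS(\tau,\bfeta)$, the profile is $\bV(z)=\ee^{z\bS(\tau,\bfeta)}\bV(0)$ without polynomial prefactor and $\hbr(k,z)=\bV(kz)$ is exactly self-similar for $k>0$; each factor $(\nu_j\partial_z+i\xi\eta_j)\hbr(\xi,z)$ then equals $\xi\,(\nu_j\bV'+i\eta_j\bV)(\xi z)$, and the change of variable $\zeta=\lambda z$ in the integrals defining $b_1$ and $b_2$ yields $b_2(\lambda\xi_1,\lambda\xi_2,\lambda\xi_3)=\lambda^2\,b_2(\xi_1,\xi_2,\xi_3)$ (three $\xi$-factors absorbed against one $\lambda^{-1}$ from the measure) and $b_1(\lambda\xi_1,\lambda\xi_2,\lambda\xi_3)=\lambda\,b_1(\xi_1,\xi_2,\xi_3)$ (only two $\xi$-factors per term, since one of the three $\rho$'s is undifferentiated). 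The main obstacle I anticipate is the exact cancellation of the two $\widehat{\bM^\bfeta[\bv]}$ boundary contributions and checking that the four interior groups of cubic integrals really arrange themselves into the precise linear combination displayed for $b_1$ and $b_2$: this is forced by the symmetries of the third derivatives of $W$ and by the construction of $\bM$ from $\bN$, but it requires careful index bookkeeping through the Fourier transform and the integrations by parts.
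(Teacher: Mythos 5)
Your proposal follows essentially the same route as the paper: Fourier transform in $y$, the Fredholm solvability condition obtained by pairing with $\overline{\hbr(k,\cdot)}$ (which solves \honeadj), absorption of the $\widehat{\bM^\bfeta[\bv]}$ boundary term through the divergence structure of $\bQ^\bfeta$ relative to $\bM^\bfeta$, the scaling $\int_0^{+\infty}|\hbr(k,z)|^2\,\dif z=|k|^{-1}\int_0^{+\infty}|\hbr(1,\zeta)|^2\,\dif\zeta$ for $a(k)$, the convolution formula for the kernel, symmetry via the placements of the undifferentiated factor together with the symmetries of the third derivatives of $W$, and homogeneity via self-similarity of $\hbr$ under the no-Jordan-block assumption. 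Only two cosmetic caveats: the $e$-type coefficients are symmetric only in their two gradient slots (the three $e$-groups then permute among themselves, as you in fact indicate), and for negative frequencies the self-similar factor is the complex conjugate form $\overline{\bV(-\xi z)}$, neither of which affects the argument.
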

\begin{proof}
By Fourier transform in $y$, \two\ is equivalent to 
$$ \htwo
\qquad \left\{\begin{array}{ll}k^2\tau^2 \hbw- \bL^{k\bfeta} \hbw\,=\,ik\tau \hbv_{s}\, + \frac12  \widehat{\bQ^\bfeta[\bv]}\,,& z>0\,,
\\[10pt]
  \bC^{k\bfeta} \hbw = - \frac12 \widehat{\bM^\bfeta[\bv]}  \,, &z=0\,,\end{array}\right.$$
For this problem to have a $z$-square integrable solution $\hbw$, the right-hand side must satisfy a Fredholm-type condition, and it turns out that this condition can be simply written in terms of $\hbr$. Indeed, an integration by parts shows the identity, for all $\bh$ and $\hbw$,
$$\textstyle\int_{0}^{+\infty} \bh \cdot \bL^{k\bfeta} \hbw\,\dif z \,-\,(\bh\cdot \bC^{k\bfeta} \hbw)_{|z=0} \,=\,\textstyle\int_{0}^{+\infty} (\bL^{-k\bfeta} \bh) \cdot  \hbw\,\dif z \,-\,( (\bC^{-k\bfeta}\bh)\cdot \hbw)_{|z=0}\,,$$
which obviously reduces to 
$$\textstyle\int_{0}^{+\infty} \big(- k^2\tau^2 \bh \cdot\hbw \,+\,\bh \cdot \bL^{k\bfeta} \hbw\big)\,\dif z \,=\,(\bh\cdot \bC^{k\bfeta} \hbw)_{|z=0}$$
if $\bh$ solves $\honeadj$. As already observed, this is the case for $\bh=\overline{\hbr}$. We thus find that for $\hbw$ to solve $\htwo$, we must have
$$ \textstyle\int_{0}^{+\infty} \overline{\hbr}\cdot ( ik\tau \hbv_{s}  + \frac12  \widehat{\bQ^\bfeta[\bv]})\,\dif z\,=\,
\frac12 (  \overline{\hbr} \cdot \widehat{\bM^\bfeta[\bv]})_{|z=0}\,.$$
The next important observation is that, since $\bQ^\bfeta$ and $\bM^\bfeta$ are closely related to each other, the right hand-side here above can be `absorbed' back into the integral.  Indeed, recall that $\bQ^\bfeta$ and $\bM^\bfeta$ are obtained from $\bQ$ and $\bM$ - defined in \eqref{eq:defQM} - by substituting $\nu_j \partial_z+\eta_j \partial_y$ for each derivative $\partial_{x_j}$, so that we can write
$$(\bQ^\bfeta[\bv])_\alpha\,=\,\Psi_\alpha\,-\,(\nu_j \partial_z+\eta_j \partial_y)(\Phi_{\alpha}^j)\,,\quad
(\bM^\bfeta[\bv])_\alpha\,=\,\nu_j\,\Phi_{\alpha}^j\,,$$
$$\Psi_\alpha:=\ealbejgal\, (\nu_j \partial_z+\eta_j \partial_y)(\vbe)\, (\nu_\ell \partial_z+\eta_\ell \partial_y)(\vga)\,,$$
$$\Phi_\alpha^j:=\begin{array}[t]{l}\ealjbegal \vbe (\nu_\ell \partial_z+\eta_\ell \partial_y)(\vga) + \ealjbelga \vga (\nu_\ell \partial_z+\eta_\ell \partial_y)(\vbe)  \\[5pt] + \daljbelgam (\nu_\ell \partial_z+\eta_\ell \partial_y)(\vbe) (\nu_m \partial_z+\eta_m \partial_y)(\vgam) .\end{array}$$
Hence by integration by parts,
$$\textstyle\int_{0}^{+\infty}  \hrb_\alpha (\widehat{\bQ^\bfeta[\bv]})_\alpha\,\dif z\,=\,\textstyle\int_{0}^{+\infty}  \hrb_\alpha (\widehat{\Psi}_\alpha\,-\,ik\eta_j \widehat{\Phi_{\alpha}^j})\dif z\,+\,
\int_{0}^{+\infty}  (\partial_z\hrb_\alpha) (\nu_j \widehat{\Phi_{\alpha}^j})\dif z\,+\,(  \hrb_\alpha (\widehat{\bM^\bfeta[\bv]})_\alpha)_{|z=0}\,.$$
Therefore, the equation that $\bv$ must satisfy reads
$$ik\tau\, \textstyle\int_{0}^{+\infty} \overline{\hbr}\cdot \hbv_{s} \dif z\, + \frac12
\int_{0}^{+\infty}  \hrb_\alpha \widehat{\Psi}_\alpha\dif z\,+ \frac12
\int_{0}^{+\infty} \,(-ik\eta_j)  \hrb_\alpha \widehat{\Phi_{\alpha}^j}\dif z\,+\,
 \frac12 \int_{0}^{+\infty}  (\partial_z\hrb_\alpha) (\nu_j \widehat{\Phi_{\alpha}^j})\dif z\,=\,0\,.
$$
Since $\bv=w*_y\br$, the first integral equivalently reads
$w_s \textstyle\int_{0}^{+\infty} |\hbr|^2 \dif z$,
and
$$\int_{0}^{+\infty} |\hbr(k,z)|^2\,\dif z\,=\,\int_{0}^{+\infty} |\hbr(1,kz)|^2\,\dif z\,=\,\frac{1}{k}\,\int_{0}^{+\infty} |\hbr(1,\zeta)|^2\,\dif \zeta\,,\forall k>0\,,$$
$$\int_{0}^{+\infty} |\hbr(k,z)|^2\,\dif z\,=\,\int_{0}^{+\infty} |\hbr(1,-kz)|^2\,\dif z\,=\,-\frac{1}{k}\,\int_{0}^{+\infty} |\hbr(1,\zeta)|^2\,\dif \zeta\,,\forall k<0\,,$$
hence the definition of $$a(k):=i\,\sgn(k)\,\tau\,\int_{0}^{+\infty} |\hbr(1,\zeta)|^2\,\dif \zeta,\;k\neq 0\,,$$
where $\sgn(k)$ denotes the sign of $k$.
Since $\Psi_\alpha$ and $\Phi_\alpha^j$ are all quadratic in $\bv$,
it just remains to read the contribution of the three other integrals to the amplitude equation by substituting $w*_y\br$ for $\bv$ and by using repeatedly the formula
$2\pi \widehat{uv}=\widehat{u} *\widehat{v}$. This yields the claimed, lengthy expression for the kernel $$b(-k,k-\xi,\xi)=b_1(-k,k-\xi,\xi)+b_2(-k,k-\xi,\xi)\,.$$ Both $b_1$ and $b_2$ turn out to be symmetric in their arguments thanks to the symmetries in the coefficients $\ealbejgal$ and $\daljbelgam$. It is indeed clear from the symmetries of $\daljbelgam$ that each term 
$$\daljbelgam\,(\nu_j\rho_{\alpha,z}+i\xi_1\eta_j\rho_\alpha)\,(\nu_\ell\rho_{\beta,z}+i\xi_2\eta_\ell\rho_\beta)\,(\nu_m\rho_{\gamma,z}+i\xi_3\eta_m\rho_\gamma)$$
in the sum involved in $b_2(\xi_1,\xi_2,\xi_3)$ is invariant under the transpositions 
$(\alpha,j,\xi_1)\leftrightarrow (\beta,\ell,\xi_2)$ and $(\beta,\ell,\xi_2)\leftrightarrow (\gamma,m,\xi_3)$.
The symmetry of $b_1(\xi_1,\xi_2,\xi_3)$ is a little bit trickier to check. In fact, we can see by recalling the meaning of the notations
$$\rho_\alpha=\widehat{r}_\alpha(\xi_1,z)\,,\;\rho_\beta=\widehat{r}_\beta(\xi_2,z)\,,\;\rho_\gamma=\widehat{r}_\gamma(\xi_3,z)\,,$$
and by using that
$$\ealbejgal=\ealgalbej\,,\;\forall \alpha,\beta,\gamma\in \{1,\ldots,n\}\,,\;\forall j,\ell\in\{1,\ldots,d\}\,,$$
that the twelve sums that are summed altogether to define $b_1(\xi_1,\xi_2,\xi_3)$ are either invariant or pairwise exchanged by the transpositions 
$(\alpha,\xi_1)\leftrightarrow (\beta,\xi_2)$ and $(\beta,\xi_2)\leftrightarrow (\gamma,\xi_3)$. This is shown on the pictures below.

\noindent
\begin{center}
\fbox{\begin{minipage}{147mm}
$\tikz[baseline]{\node[fill=gray!10,anchor=base] (n01){$\ealbejgal$};} (\tikz[baseline]{\node[fill=gray!10,anchor=base] (n1)
            {$\nu_j \nu_\ell \rho_\alpha \rho_{\beta,z} \rho_{\gamma,z}$};} 
            +  \tikz[baseline]{\node[fill=gray!10,anchor=base] (n2)
            {$i \xi_2 \eta_j \nu_\ell \rho_\alpha \rho_\beta \rho_{\gamma,z}$};} 
            +  \tikz[baseline]{\node[fill=gray!10,anchor=base] (n3)
            {$i \xi_3 \eta_\ell \nu_j \rho_\alpha \rho_{\beta,z} \rho_\gamma$};} 
            -  \tikz[baseline]{\node[fill=gray!10,anchor=base] (n4)
            {$\xi_2 \xi_3 \eta_j \eta_\ell \rho_\alpha \rho_\beta \rho_\gamma$};})\,+\,$\\
            
            \vspace{5mm}
$\tikz[baseline]{\node[fill=gray!10,anchor=base] (n02){$\ebealjgal$};} (\tikz[baseline]{\node[fill=gray!10,anchor=base] (n5)
            {$\nu_j \nu_\ell \rho_{\alpha,z} \rho_{\beta} \rho_{\gamma,z}$};} 
            + \tikz[baseline]{\node[fill=gray!10,anchor=base] (n6)
            {$ i\xi_1 \eta_j \nu_\ell \rho_\alpha \rho_\beta \rho_{\gamma,z}$};} 
            + \tikz[baseline]{\node[fill=gray!10,anchor=base] (n7)
            {$i \xi_3 \eta_\ell \nu_j \rho_{\alpha,z} \rho_{\beta} \rho_\gamma$};} 
            - \tikz[baseline]{\node[fill=gray!10,anchor=base] (n8)
            {$\xi_1 \xi_3 \eta_j \eta_\ell \rho_\alpha \rho_\beta \rho_\gamma$};})\,+\,$\\
            
            \vspace{5mm}
$\tikz[baseline]{\node[fill=gray!20,anchor=base] (n03){$\egaaljbel$};} (\tikz[baseline]{\node[] (n9)
            {$\nu_j \nu_\ell \rho_{\alpha,z} \rho_{\beta,z} \rho_{\gamma}$};} 
            + \tikz[baseline]{\node[fill=gray!20,anchor=base] (n10)
            {$ i\xi_1 \eta_j \nu_\ell \rho_\alpha \rho_{\beta,z} \rho_{\gamma}$};} 
            + \tikz[baseline]{\node[fill=gray!20,anchor=base] (n11)
            {$ i \xi_2 \eta_\ell \nu_j \rho_{\alpha,z} \rho_{\beta} \rho_\gamma$};} 
            - \tikz[baseline]{\node[] (n12)
            {$\xi_2 \xi_1 \eta_j \eta_\ell \rho_\alpha \rho_\beta \rho_\gamma$};})\,.\,\,\,\,$\\

\vspace{5mm}
\centerline{Effect of transposition  $(\alpha,\xi_1)\leftrightarrow (\beta,\xi_2)$}             
\begin{tikzpicture}[overlay]
	\path[<->] (n10) edge [in=-150,out=-30] (n11);
	\path[<->] (n01) edge [in=90,out=-90] (n02);
	\path[<->] (n1) edge [in=90,out=-90] (n5);
	\path[<->] (n2) edge [in=90,out=-90] (n6);
	\path[<->] (n3) edge [in=90,out=-90] (n7);
	\path[<->] (n4) edge [in=90,out=-90] (n8);
\end{tikzpicture}                 
\end{minipage}}
\end{center}

\vspace{5mm}

\noindent
\begin{center}
\fbox{\begin{minipage}{147mm}
$\tikz[baseline]{\node[fill=gray!20,anchor=base] (n01){$\ealbejgal$};} (\tikz[baseline]{\node[] (n1)
            {$\nu_j \nu_\ell \rho_\alpha \rho_{\beta,z} \rho_{\gamma,z}$};} 
            +  \tikz[baseline]{\node[fill=gray!20,anchor=base] (n2)
            {$i \xi_2 \eta_j \nu_\ell \rho_\alpha \rho_\beta \rho_{\gamma,z}$};} 
            +  \tikz[baseline]{\node[fill=gray!20,anchor=base] (n3)
            {$i \xi_3 \eta_\ell \nu_j \rho_\alpha \rho_{\beta,z} \rho_\gamma$};} 
            -  \tikz[baseline]{\node[] (n4)
            {$\xi_2 \xi_3 \eta_j \eta_\ell \rho_\alpha \rho_\beta \rho_\gamma$};})\,+\,$\\
            
            \vspace{5mm}
$\tikz[baseline]{\node[fill=gray!10,anchor=base] (n02){$\ebealjgal$};} (\tikz[baseline]{\node[fill=gray!10,anchor=base] (n5)
            {$\nu_j \nu_\ell \rho_{\alpha,z} \rho_{\beta} \rho_{\gamma,z}$};} 
            + \tikz[baseline]{\node[fill=gray!10,anchor=base] (n6)
            {$ i\xi_1 \eta_j \nu_\ell \rho_\alpha \rho_\beta \rho_{\gamma,z}$};} 
            + \tikz[baseline]{\node[fill=gray!10,anchor=base] (n7)
            {$i \xi_3 \eta_\ell \nu_j \rho_{\alpha,z} \rho_{\beta} \rho_\gamma$};} 
            - \tikz[baseline]{\node[fill=gray!10,anchor=base] (n8)
            {$\xi_1 \xi_3 \eta_j \eta_\ell \rho_\alpha \rho_\beta \rho_\gamma$};})\,+\,$\\
            
            \vspace{5mm}
$\tikz[baseline]{\node[fill=gray!10,anchor=base] (n03){$\egaaljbel$};} (\tikz[baseline]{\node[fill=gray!10,anchor=base] (n9)
            {$\nu_j \nu_\ell \rho_{\alpha,z} \rho_{\beta,z} \rho_{\gamma}$};} 
            + \tikz[baseline]{\node[fill=gray!10,anchor=base] (n10)
            {$ i\xi_1 \eta_j \nu_\ell \rho_\alpha \rho_{\beta,z} \rho_{\gamma}$};} 
            + \tikz[baseline]{\node[fill=gray!10,anchor=base] (n11)
            {$ i \xi_2 \eta_\ell \nu_j \rho_{\alpha,z} \rho_{\beta} \rho_\gamma$};} 
            - \tikz[baseline]{\node[fill=gray!10,anchor=base] (n12)
            {$\xi_2 \xi_1 \eta_j \eta_\ell \rho_\alpha \rho_\beta \rho_\gamma$};})\,.\,\,\,\,$\\

\vspace{5mm}
\centerline{Effect of transposition  $(\beta,\xi_2)\leftrightarrow (\gamma,\xi_3)$}             
\begin{tikzpicture}[overlay]
	\path[<->] (n2) edge [in=-150,out=-30] (n3);
	\path[<->] (n02) edge [in=90,out=-90] (n03);
	\path[<->] (n5) edge [in=90,out=-90] (n9);
	\path[<->] (n6) edge [in=90,out=-90] (n10);
	\path[<->] (n7) edge [in=90,out=-90] (n11);
	\path[<->] (n8) edge [in=90,out=-90] (n12);
\end{tikzpicture}                 
\end{minipage}}
\end{center}

\vspace{5mm}

The behavior of $a$ and $b$ regarding conjugation is a straightforward consequence of their definition and of the definition of $\hbr(-k,z)=\overline{\hbr(k,z)}$.

We can find the homogeneity properties of $b_1$ and $b_2$ by recalling that 
$$\hbr(k,z)\,=\,\hbr(1,kz)\,,\;k>0\,,$$
and by observing that under our additional assumption on the matrix $\bS(\tau,\bfeta)$, 
$$\hbr(1,kz)\,=\,\ee^{kz \bS(\tau,\bfeta)}\bV(0)$$ is a linear combination of \emph{exponential functions}\footnote{By exponential function of $z$ we mean a function of the form $\ee^{\omega z}$, with $\Real \omega<0$ here.} of $kz$.
Then we see that $b_1$ consists of the $z$-integral a sum of `homogeneous' terms, made of products of exponential functions of $m_i z$ involving either two $z$-derivatives, or only one multiplied by one of the frequencies $\xi_1,\xi_2,\xi_3$, or no $z$-derivative but the product of two frequencies, while $b_2$ consists of the $z$-integral of a sum of products of exponential functions of $m_i z$ in which the sum of the number of $z$-derivatives and the number of frequencies  equals three. 
\end{proof}

\begin{remark}{\rm 
It was pointed out in our earlier paper \cite{BenzoniCoulombel} that, at least in the frameworks considered there (namely, a Hamiltonian framework that includes the variational problems considered here when the coefficients $\ebealjgal$ are equal to zero on the one hand, and general boundary value problems associated with first order hyperbolic systems on the other hand), the coefficient $a$ in front of the time derivative in the amplitude equation is proportional to the derivative $\partial_\tau\Delta(\tau,\bfeta)$ of the Lopatinskii determinant. 
In Theorem \ref{mainthm}, we see that the coefficient $a$ is automatically nonzero. This is consistent with the fact proved by Serre (see Theorem \ref{thm:Serre} stated above) that $\partial_\tau\Delta(\tau,\bfeta)$ is nonzero as soon as the space of linear surface waves is one-dimensional, in variational frameworks. In more general frameworks, the condition $\partial_\tau\Delta(\tau,\bfeta)\neq 0$ is more stringent than the requirement of having a one-dimensional space of linear surface waves.
}
\end{remark}

\begin{corollary}
In the framework of Theorem \ref{mainthm}, the amplitude equation associated with surface waves of \NLBVP\  is endowed with a \emph{Hamiltonian structure}, and admits a conservation law associated with translation invariance in the direction of propagation.
\end{corollary}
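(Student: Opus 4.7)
The plan is to identify a cubic Hamiltonian $H$ and a skew-adjoint cosymplectic operator $\J$ such that the amplitude equation reads $w_s = \J\,\delta H/\delta w$, and then to derive the conservation law from the translation invariance of $H$ via Noether's theorem.

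First, I will rewrite the amplitude equation in operator form as $a(D)\,w_s + Q[w,w] = 0$, where $a(D)$ is the Fourier multiplier with symbol $a(k)=ic\,\sgn(k)$ and $Q[w,w]$ is the bilinear operator that in Fourier reads $\hw \mapsto \int_\R b(-k,k-\xi,\xi)\,\hw(k-\xi)\,\hw(\xi)\,\dif\xi$. Since $a(k)$ is purely imaginary and odd, $a(D)$ is skew-adjoint on $L^2(\R;\R)$ --- in fact a nonzero multiple of the Hilbert transform --- so its formal inverse $\J := -a(D)^{-1}$, with symbol $i\sgn(k)/c$, is also skew-adjoint. Next, I will introduce the cubic functional
$$H[w]\,:=\,\frac{1}{3}\iiint_{\xi_1+\xi_2+\xi_3 = 0}\,b(\xi_1,\xi_2,\xi_3)\,\hw(\xi_1)\,\hw(\xi_2)\,\hw(\xi_3)\,\dif\sigma,$$
with $\dif\sigma$ the induced measure on the plane $\xi_1+\xi_2+\xi_3=0$ (and an implicit normalization). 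The functional $H[w]$ is real because the integrand is invariant under global sign reversal $\xi_i \to -\xi_i$, thanks to the conjugation identities $b(-\xi_1,-\xi_2,-\xi_3)=\overline{b(\xi_1,\xi_2,\xi_3)}$ and $\hw(-k)=\overline{\hw(k)}$. A direct computation of the variational derivative of $H$, exploiting the full permutation symmetry of $b$ proved in Theorem \ref{mainthm}, identifies $\delta H/\delta w$ with $Q[w,w]$ up to a universal factor that can be absorbed into the normalization of $H$. Consequently the amplitude equation takes the canonical form $w_s = \J\,\delta H/\delta w$, establishing the Hamiltonian structure.

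For the conservation law, observe that $H$ is manifestly $T_\sigma$-invariant, where $T_\sigma : w(\cdot,s)\mapsto w(\cdot+\sigma,s)$, because on the support of the constraint $\xi_1+\xi_2+\xi_3=0$ the extra phase $\ee^{-i(\xi_1+\xi_2+\xi_3)\sigma}$ equals one. Noether's theorem applied to the Hamiltonian system with cosymplectic operator $\J$ then furnishes a conserved Noether charge $P[w]$, characterized by $\J\,\delta P/\delta w = \partial_y w$ (the infinitesimal generator of $T_\sigma$). Translating this into Fourier yields $\widehat{\delta P/\delta w}(k) = -a(k)\cdot ik\,\hw(k) = c|k|\,\hw(k)$, hence (up to an additive constant)
$$P[w]\,=\,\frac{c}{2}\,\int_\R |k|\,|\hw(k)|^2\,\frac{\dif k}{2\pi}\,=\,\frac{c}{2}\,\|w(\cdot,s)\|_{\dot{H}^{1/2}(\R)}^{2}\,.$$
Its conservation along the flow can be double-checked by direct computation of $(\dif/\dif s)P[w]$ using the amplitude equation: the resulting triple integral vanishes by the same symmetry-plus-conjugation argument used above to establish reality of $H$.

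The main subtlety, though routine once identified, lies in computing the variational derivative of $H$ directly in Fourier, because $\hw(k)$ and $\hw(-k)$ are conjugate rather than independent. This is cleanly handled either by computing $\delta H/\delta w$ in physical space before Fourier-transforming, or by treating $(\hw(k),\hw(-k))$ as a conjugate pair via Wirtinger calculus. All other steps reduce to Plancherel identities and bookkeeping with the symmetries of $b$ already proved in Theorem \ref{mainthm}.
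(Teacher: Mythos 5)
Your proposal is correct and follows essentially the same route as the paper: writing the equation as a skew-adjoint Fourier multiplier (a multiple of the Hilbert transform) acting on the variational derivative of the cubic functional with kernel $b$, whose reality and gradient identification rest on the symmetry and conjugation properties of $b$, and identifying the conserved momentum $\tfrac12\int|k|\,|\hw(k)|^2\,\dif k$ as the charge generating $y$-translations through that same operator. The only difference is cosmetic: you invoke Noether's theorem (with the direct symmetrization computation as a double-check), whereas the paper verifies conservation directly and records the translation-invariance relation $w_y=-\tfrac{1}{2\pi}{\mathcal H}(\delta{\mathcal M}[w])$ via its appendix propositions.
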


\begin{proof} The amplitude equation derived in Theorem \ref{mainthm} equivalently reads
\begin{equation}\label{eq:amplitude}
w_s + {\mathcal H}( {\mathcal B}[w]) \,=\,0\,,
\end{equation}
where ${\mathcal H}$ denotes the Hilbert transform, defined by $\widehat{{\mathcal H}(v)}(k)=-i\sgn(k)\hv(k)$ for all $v\in L^2$,
and ${\mathcal B}$ is defined at least for Schwartz functions by
$$\widehat{{\mathcal B}[w]}(k)=\frac{1}{c}\,\int b(-k,k-m,m)\,\hw(k-m)\,\hw(m)\,\dif m\,.$$
Up to dividing $\br$ by $\sqrt{|c|}$, we can even assume that $c=1$, which we do from now on.
As was already pointed out in \cite{AliHunterParker},
because of the symmetry of the kernel $b$ the quadratic nonlocal operator ${\mathcal B}$ can be identified with the variational derivative of the functional ${\mathcal T}$ defined by
$${\mathcal T}[w]=\frac{1}{3 
}\,\iint b(-k-m,k,m)\,\hw(-k-m)\,\hw(k)\,\hw(m)\,\dif k\,\dif m\,.$$
For completeness, this is shown in the appendix in a more precise analytical framework, see Proposition \ref{prop:varderham}.

Therefore, \eqref{eq:amplitude} can be written as 
$$w_s + {\mathcal H}( \delta{\mathcal T}[w]) \,=\,0\,,$$
which is Hamiltonian since ${\mathcal H}$ is skew-adjoint. 

Similarly as the momentum pointed out by Hunter in a periodic setting \cite{Hunter2006}, the quantity
$${\mathcal M}[w]:=\frac12 \int |k| \,|\hw(k,s)|^2\,\dif k$$ is conserved along (smooth) solutions of \eqref{eq:amplitude}.
Indeed, if $w$ is a smooth solution of \eqref{eq:amplitude} we have
$$\partial_s\left(\frac12 \int |k| \,\hw(-k,s)\,\hw(k,s)\,\dif k\right) = i \int k \;\widehat{ {\mathcal B}[w]}(k,s)\,\hw(-k,s)\,\dif k$$
$$= i \iint k \;b(-k,k-m,m)\,\hw(-k,s)\,\hw(k-m,s)\,\hw(m,s)\,\dif m\,\dif k$$
$$=-\tfrac{i}{3}\;\iint(-k+k-m+m) \;b(-k,k-m,m)\,\hw(-k,s)\,\hw(k-m,s)\,\hw(m,s)\,\dif m\,\dif k\,=\,0\,,$$
by the symmetries of $b$.
Futhermore, ${\mathcal M}$ is associated with $y$-translation invariance in that
$$w_y=-\tfrac{1}{2\pi}\,{\mathcal H}(\delta {\mathcal M}[w])\,,$$
see Proposition \ref{prop:vardermom}.
\end{proof}

\begin{remark}{\rm 
\begin{enumerate}
\item When all the coefficients $\daljbelgam$ are equal to zero then
$b_2\equiv 0$ and $b$ is positively homogeneous degree one. This is what happens when the energy $W$ is quadratic in $\nabla \bu$, as for instance in the simplified model for liquid crystals studied by
Austria and Hunter \cite{AustriaHunter,Austria}, for which they find a kernel of the form
$$b_1(k,\ell,m)=(A- i B \sgn(k \ell m)) \frac{|k\ell|+|\ell m|+|m k|}{|k|+|\ell|+|m|}\,+\,
(C- i D \sgn(k \ell m)) \frac{k\ell+\ell m+m k}{|k|+|\ell|+|m|}\,,$$
and point out that the special case $A=-C=2$, $B=D=0$ reduces to
$$b_1(k,\ell,m)= |k|+|\ell|+|m|\,, \;\forall k,\ell,m\,,\;k+\ell+m=0\,.$$
The more complicated kernel associated with the full model for liquid crystals is also of degree one, see \cite[pp.~101-103]{Austria} for explicit formulas.
\item When the coefficients $\ealbejgal$ are equal to zero, which happens when $W$ depends only on $\nabla \bu$ and not on $\bu$, then $b$ is positively homogeneous degree two.
Weakly nonlinear Rayleigh waves in elasticity correspond to a seminal example of this situation.  A simplified version of the amplitude equation associated with elastic waves is named after Hamilton, Il'insky, and Zabolotskaya \cite{HamiltonIlinskyZabolotskaya}, and has a kernel of the form
$$b_2(k,\ell,m)=\,\frac{|k \ell m|}{|k|+|\ell|+|m|}.$$
\end{enumerate}
}
\end{remark}

\subsection{Well-posedness theory of amplitude equations}

Despite its nice algebraic form, the amplitude equation \eqref{eq:amplitude} is not easy to deal with from the analytical point of view. As far as the existence of classical solutions is concerned, it is important to derive \emph{a priori} estimates without loss of derivatives. One of them is for free, and is given by the conservation of the momentum ${\mathcal M}$. Indeed, if $w$ is a smooth solution of \eqref{eq:amplitude} then $u:=|\partial_y|^{1/2}w$, defined as the inverse Fourier transform of the mapping $k\mapsto |k|^{1/2}\widehat{w}(k,s)$ at each time $s$, is such that
the $L^2$ norm of $u$ is independent of $s$. Furthermore, as was pointed out by Hunter in a periodic setting \cite{Hunter2006}, the new unknown $u$ is to be sought as a solution of a nonlocal equation that is more amenable to a priori estimates than \eqref{eq:amplitude}. As a matter of fact, 
the amplitude equation \eqref{eq:amplitude} reads\footnote{Recall that we have set $c=1$ without loss of generality.}, in Fourier variables,
$$\hw_s(k,s)\,-\,i\,\sgn(k)\,\int_{\R}\,b(-k,k-m,m)\, \hw(k-m,s)\,\hw(m,s)\,\dif m\,=\,0\,,$$
which is equivalent to
$$\hu_s(k,s)\,-\,i\,k\,\int_{\R}\,p(-k,k-m,m)\, \hu(k-m,s)\,\hu(m,s)\,\dif m\,=\,0\,,$$
where the new kernel $p$ is defined by
$$p(k,\ell,m):=\frac{b(k,\ell,m)}{|k\ell m|^{1/2}}\,,\quad k\ell m\neq 0\,.$$
We observe that $p$ has the same symmetries as $b$. The advantage of the equation on $\hu$ is that the Fourier multiplier $k$ is easier to handle than $\sgn(k)$. The drawback is that the kernel $p$ is more singular than $b$. However, Hunter identified some conditions on $p$ ensuring a priori estimates without loss of derivatives for the associated nonlocal equation, when $p$ is positively homogeneous. We state them below in the cases we are concerned with, which correspond to the kernels $p_1$ and $p_2$ associated with $b_1$ and $b_2$.
These conditions read
\begin{itemize}
\item[\Pun]$\qquad |p_1(k,\ell,m)|\lesssim 1/ (\min (|k|,|\ell|,|m|))^{1/2}  \,,$
\item[\Pde]$\qquad |p_2(k,\ell,m)|\lesssim (\min (|k|,|\ell|,|m|))^{1/2}  \,,$
\end{itemize}
where the symbol $\lesssim$ means $\leq$ up to a multiplicative constant. We claim that \Pun\ and \Pde\ are indeed satisfied by the kernels obtained from Theorem \ref{mainthm}, and that they imply the local well-posedness of the amplitude equation \eqref{eq:amplitude} in $H^2$.

\begin{lemma}\label{lem:pp}
In the framework of Theorem \ref{mainthm}, if the matrix $\bS(\tau,\bfeta)$ from Theorem \ref{thm:Serre} has no Jordan blocks, the kernels $p_1$ and $p_2$ defined by
$$p_1(k,\ell,m):=\frac{b_1(k,\ell,m)}{|k\ell m|^{1/2}}\,,\;p_2(k,\ell,m):=\frac{b_2(k,\ell,m)}{|k\ell m|^{1/2}}\,,\;k\ell m\neq 0\,.$$
satisfy \Pun\ and \Pde\ respectively.
\end{lemma}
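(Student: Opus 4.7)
The plan is to exploit the explicit exponential structure of $\hbr(\xi,\cdot)$ granted by the no-Jordan-block hypothesis on $\bS(\tau,\bfeta)$, and to estimate each of the $z$-integrals appearing in $b_1$ and $b_2$ by size/decay-rate arguments, then reduce (C1) and (C2) to elementary algebraic inequalities.

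First I would record the basic form of $\hbr$. Since $\bS(\tau,\bfeta)$ is a stable matrix without Jordan blocks, there exist eigenvalues $\omega_1,\ldots,\omega_n$ with $\Re\omega_i\leq -c<0$ and vectors $\bV_i$ such that
$$\hbr(\xi,z) \,=\, \sum_{i} a_i(\xi)\,\bV_i\,\ee^{m_i(\xi)z}\,,\qquad m_i(\xi):=\left\{\begin{array}{ll}\omega_i\,\xi\,,&\xi>0\,,\\ \overline{\omega_i}\,|\xi|\,,&\xi<0\,,\end{array}\right.$$
so that $\Re m_i(\xi)\leq -c|\xi|$ and $|m_i(\xi)|\leq C|\xi|$, while the coefficients $a_i(\xi)$ are bounded. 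Each $z$-derivative $\rho_{\alpha,z}$ therefore brings down a factor $m_i(\xi)=O(|\xi|)$, so every elementary factor of the form $(\nu_j\rho_{\alpha,z}+i\xi_1\eta_j\rho_\alpha)$ has modulus bounded by $C|\xi_1|\,\ee^{\Re m_i(\xi_1)z}$. A generic triple product to be integrated in $b_1$ or $b_2$ is thus dominated by an exponential $\ee^{(m_i(\xi_1)+m_j(\xi_2)+m_k(\xi_3))z}$, and the elementary estimate
$$\Bigl|\int_{0}^{+\infty}\ee^{(m_i(\xi_1)+m_j(\xi_2)+m_k(\xi_3))z}\,\dif z\Bigr|\,\leq\,\frac{1}{|\Re(m_i(\xi_1)+m_j(\xi_2)+m_k(\xi_3))|}\,\leq\,\frac{1}{c\,(|\xi_1|+|\xi_2|+|\xi_3|)}$$
is the one clean tool I need.

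Applying this to $b_2$, every summand is a product of three factors each of size $O(|\xi_i|)$, so
$$|b_2(\xi_1,\xi_2,\xi_3)|\,\lesssim\,\frac{|\xi_1\xi_2\xi_3|}{|\xi_1|+|\xi_2|+|\xi_3|}\,.$$
For $b_1$, the three blocks of terms (with coefficients $\ealbejgal$, $\ebealjgal$, $\egaaljbel$ respectively) each contain precisely one factor that is independent of one frequency: in the first block the $\alpha$-factor has no $\partial_z$ and no $\xi_1$, hence is $O(1)$, while the $\beta$- and $\gamma$-factors are $O(|\xi_2|)$ and $O(|\xi_3|)$. The analogous observation for the other two blocks yields
$$|b_1(\xi_1,\xi_2,\xi_3)|\,\lesssim\,\frac{|\xi_1\xi_2|+|\xi_2\xi_3|+|\xi_3\xi_1|}{|\xi_1|+|\xi_2|+|\xi_3|}\,.$$

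It remains to verify the quotients $p_i=b_i/|\xi_1\xi_2\xi_3|^{1/2}$ obey (C1) and (C2). Setting (WLOG) $|\xi_1|\leq|\xi_2|\leq|\xi_3|$, one finds directly
$$|p_2|\,\lesssim\,\frac{|\xi_1\xi_2\xi_3|^{1/2}}{|\xi_3|}\,=\,\Bigl(\frac{|\xi_2|}{|\xi_3|}\Bigr)^{\!1/2}|\xi_1|^{1/2}\,\leq\,|\xi_1|^{1/2},$$
which is exactly \Pde. Likewise,
$$|p_1|\,\lesssim\,\frac{|\xi_2\xi_3|}{|\xi_3|\,|\xi_1\xi_2\xi_3|^{1/2}}\,=\,\Bigl(\frac{|\xi_2|}{|\xi_3|}\Bigr)^{\!1/2}\frac{1}{|\xi_1|^{1/2}}\,\leq\,\frac{1}{|\xi_1|^{1/2}},$$
which is \Pun. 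The only delicate point in the whole argument is verifying that no cancellation hides behind a small real part of $\sum m_i(\xi_i)$; this is where the uniform bound $\Re\omega_i\leq -c$ is crucial and where the no-Jordan-blocks assumption is actually used (otherwise $\hbr$ would contain polynomial prefactors producing worse singularities in the $\xi\to0$ limit). Everything else is bookkeeping over the finitely many exponential-product terms.
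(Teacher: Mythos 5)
Your proof is correct and follows essentially the same route as the paper: exploit the purely exponential form of $\hbr$ under the no-Jordan-block hypothesis to bound each $z$-integral by $C/(|\xi_1|+|\xi_2|+|\xi_3|)$, note that the numerators of $b_2$ (resp.\ $b_1$) are cubic (resp.\ quadratic) in the frequencies, and conclude by an elementary inequality. The only cosmetic difference is that you order $|\xi_1|\leq|\xi_2|\leq|\xi_3|$ where the paper invokes Young's inequality and permutes the roles of the variables.
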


\begin{proof}
The kernel
$b_2(\xi_1,\xi_2,\xi_3)$ is a linear combination of terms of the form
$$\xi_1\xi_2\xi_3 \int_{0}^{+\infty}  \ee^{-(\omega_1|\xi_1| +\omega_2|\xi_2|+\omega_3|\xi_3|) z}\,\dif z\,=\,
\frac{\xi_1\xi_2\xi_3}{\omega_1|\xi_1| +\omega_2|\xi_2|+\omega_3|\xi_3|}\,,$$
where $\omega_1$, $\omega_2$, $\omega_3$ are eigenvalues of $-\bS(\tau,\bfeta)$ of positive real part. 
Therefore, $|b_2(\xi_1,\xi_2,\xi_3)|$ is bounded by a sum of terms
$$\lesssim  \frac{|\xi_1\xi_2\xi_3|}{\min(\Real \omega_1,\Real\omega_2,\Real\omega_3)\,(|\xi_1| +|\xi_2|+|\xi_3|)}\,,$$
hence
$$|p_2(\xi_1,\xi_2,\xi_3)|\lesssim  \frac{|\xi_1\xi_2\xi_3|^{1/2}}{|\xi_1| +|\xi_2|+|\xi_3|}\,,$$
and $$\frac{|\xi_1\xi_2\xi_3|^{1/2}}{|\xi_1| +|\xi_2|+|\xi_3|}\leq |\xi_1|^{1/2}\frac{|\xi_2\xi_3|^{1/2}}{|\xi_2|+|\xi_3|} \leq \tfrac12 |\xi_1|^{1/2}$$ by Young's inequality. By permuting the roles of $\xi_1$,
$\xi_2$, $\xi_3$, we get  that $p_2$ satisfies \Pde\ . The reasoning is similar for $p_1$. Indeed, the kernel $b_1$ is a linear combination of terms of the form
$$\xi_j\xi_{j+1} \int_{0}^{+\infty}  \ee^{-(\omega_1|\xi_1| +\omega_2|\xi_2|+\omega_3|\xi_3|) z}\,\dif z\,=\,
\frac{\xi_j\xi_{j+1}}{\omega_1|\xi_1| +\omega_2|\xi_2|+\omega_3|\xi_3|}\,,\;j=1,2,3\,,$$
where $\xi_4=\xi_1$ (and also $\xi_5=\xi_2$ below) for convenience,
so that 
$$|p_1(\xi_1,\xi_2,\xi_3)|\lesssim  \sum_{j=1}^{3}\frac{|\xi_j\xi_{j+1}|^{1/2}}{|\xi_{j+2}|^{1/2}(|\xi_1| +|\xi_2|+|\xi_3|)}\,.$$
Since 
$$\frac{|\xi_j\xi_{j+1}|^{1/2}}{|\xi_{j+2}|^{1/2}(|\xi_1| +|\xi_2|+|\xi_3|)}\leq \frac{1}{|\xi_{j+2}|^{1/2}}\,\frac{|\xi_j\xi_{j+1}|^{1/2}}{|\xi_j| +|\xi_{j+1}|} \leq  \frac{1}{2|\xi_{j+2}|^{1/2}}$$
for all $j\in \{1,2,3\}$, this shows that $p_1$ satisfies \Pun\ .
\end{proof}

\begin{theorem}\label{thm:wellposed}
In the framework of Theorem \ref{mainthm} (with $c=1$ without loss of generality), assuming that the matrix $\bS(\tau,\bfeta)$ from Theorem \ref{thm:Serre} has no Jordan blocks, let us consider the nonlocal equation governing $u=|\partial_y|^{1/2}w$,
\begin{equation}\label{eq:amplitudeb}
u_s = ( {\mathcal P}[u])_y\,,
\end{equation}
where ${\mathcal P}$ is defined by
$$\widehat{{\mathcal P}[u]}(k)=\int p(-k,k-m,m)\,\hu(k-m)\,\hu(m)\,\dif m\,,
\quad p(k,\ell,m):=\frac{b(k,\ell,m)}{|k\ell m|^{1/2}}\,,\;k\ell m\neq 0\,.$$
Then \eqref{eq:amplitude} is locally well-posed in $H^{\sigma}(\R)$, $\sigma>2$.
\end{theorem}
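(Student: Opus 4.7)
The proof follows the Fourier-based energy method that Hunter developed for nonlocal Burgers-type equations, combined with the symmetry properties of the kernel, and implemented in our joint work with Tzvetkov for the related problem arising from first order hyperbolic systems. In Fourier variables, \eqref{eq:amplitudeb} reads
\[
\hu_s(k,s) \,=\, i\,k \int_\R p(-k, k-m, m)\, \hu(k-m, s)\,\hu(m,s)\, \dif m,
\]
so the only differentiation is the explicit factor $k$, while the nonlinearity carries the singular kernel $p$.

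The core is an $H^\sigma$ energy estimate without loss of derivatives. Multiplying by $(1+k^2)^\sigma \overline{\hu(k,s)}$, integrating in $k$, taking real parts, and changing variables to $\xi_1=-k$, $\xi_2=k-m$, $\xi_3=m$ (which lie on $\xi_1+\xi_2+\xi_3=0$) produces a trilinear form with weight $-i\,\xi_1(1+\xi_1^2)^\sigma$. Since $p$ is fully $S_3$-symmetric by Theorem \ref{mainthm}, this weight can be replaced by its symmetrization
\[
\tfrac{i}{3}\bigl[\xi_1(1+\xi_1^2)^\sigma + \xi_2(1+\xi_2^2)^\sigma + \xi_3(1+\xi_3^2)^\sigma\bigr],
\]
which, by a telescoping/commutator estimate on $\xi_1+\xi_2+\xi_3=0$, is bounded by a constant times $|\xi_{\min}|\,(1+|\xi_{\max}|)^{2\sigma-1}$. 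One derivative is thus saved. Combined with \Pun\ and \Pde\ from Lemma \ref{lem:pp}, which give $|p(\xi_1,\xi_2,\xi_3)|\lesssim |\xi_{\min}|^{1/2}+|\xi_{\min}|^{-1/2}$, and a dyadic decomposition according to which frequency is smallest, Cauchy--Schwarz and the Sobolev embedding $H^\sigma\hookrightarrow L^\infty$ yield
\[
\tfrac{\dif}{\dif s}\|u(\cdot,s)\|_{H^\sigma}^2 \,\lesssim\, \|u(\cdot,s)\|_{H^\sigma}^3;
\]
the assumption $\sigma>2$ gives comfortable room and is also convenient to control $\mathcal{P}$ as a bounded bilinear map $H^\sigma\times H^\sigma \to H^{\sigma-1}$.

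Existence on an interval $[0,s_\ast]$ with $s_\ast \gtrsim 1/\|u_0\|_{H^\sigma}$ then follows by a standard parabolic (or Fourier-cutoff) regularization: for $\eps>0$, the equation $u^\eps_s=(\mathcal{P}[u^\eps])_y-\eps\,|\partial_y|^{2\sigma+1}u^\eps$ is globally well-posed in $H^\sigma$ by Duhamel and Banach fixed point, the above estimate is uniform in $\eps$, and Aubin--Lions provides a strong subsequential limit in $C^0_s H^{\sigma-\delta}_y$ sufficient to pass to the limit in the nonlinearity. Uniqueness and continuous dependence come from an analogous $L^2$ estimate on the difference of two solutions, where the same symmetrization saves one derivative and the residual bilinear form is controlled by the $H^\sigma$ norm of each solution (here $\sigma>2$ is used through Sobolev embedding).

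The main obstacle is the symmetrization step: one must verify quantitatively that the symbol $\xi_1(1+\xi_1^2)^\sigma+\xi_2(1+\xi_2^2)^\sigma+\xi_3(1+\xi_3^2)^\sigma$, restricted to $\xi_1+\xi_2+\xi_3=0$, really drops a derivative, and that the residual symbol combined with the singular factor $|\xi_1\xi_2\xi_3|^{-1/2}$ coming from the passage $b\mapsto p$ yields a dyadically summable trilinear form. This is precisely where \Pun\ and \Pde\ from Lemma \ref{lem:pp} are used in their sharpest form; the remainder is routine nonlinear PDE.
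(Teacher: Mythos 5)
Your proposal follows essentially the same route as the paper: Fourier-side energy estimates in which the trilinear form is symmetrized thanks to the full symmetry of $p$, the resulting symmetrized weight on $\{\xi_1+\xi_2+\xi_3=0\}$ gains one derivative (this is exactly the role of Hunter's Proposition~3, i.e.\ the inequalities \eqref{eq:est0}--\eqref{eq:est-1/2} in the paper), the singular factors are absorbed via \Pun\ and \Pde, and existence is then obtained by a regularization scheme (the paper invokes the regularizing method of \cite{Benzoni2009} together with estimates for the linearized equation, you use parabolic regularization plus Aubin--Lions; this difference is inessential). One quantitative slip: the symmetrized symbol $\xi_1(1+\xi_1^2)^\sigma+\xi_2(1+\xi_2^2)^\sigma+\xi_3(1+\xi_3^2)^\sigma$ restricted to $\xi_1+\xi_2+\xi_3=0$ is \emph{not} bounded by $|\xi_{\min}|(1+|\xi_{\max}|)^{2\sigma-1}$ (take $\xi_3=1$, $\xi_1=N$, $\xi_2=-N-1$: the mean-value estimate gives size $N^{2\sigma}$); the correct bound is $|\xi_{\min}|(1+|\xi_{\max}|)^{2\sigma}$, with the gained derivative transferred to the smallest frequency rather than lost outright. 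This weaker (true) bound still closes the argument: splitting $(1+|\xi_{\max}|)^{2\sigma}$ over the two comparable large frequencies and using $|p|\lesssim|\xi_{\min}|^{1/2}+|\xi_{\min}|^{-1/2}$, Cauchy--Schwarz in the large frequencies leaves the factor $\int(|k|^{1/2}+|k|^{3/2})\,|\hu(k)|\,\dif k$, and it is the finiteness of $\int|k|(1+|k|)^{2(1-\sigma)}\dif k$ -- not the embedding $H^\sigma\hookrightarrow L^\infty$ -- that pins down the threshold $\sigma>2$, exactly as in the paper's computation.
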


\begin{proof}
As said above, it is crucial to derive a priori estimates without loss of derivatives.
We already know from the conservation of ${\mathcal M}$ along solutions of \eqref{eq:amplitude} that for classical solutions to \eqref{eq:amplitudeb} we have
$$\frac{\dif }{\dif s} \|u\|_{L^2}^2 = 0\,,$$
and we claim that for all $\rho>0$ and $\sigma>2$,
\begin{equation}
\label{eq:estrho}
\frac{\dif }{\dif s} \|u\|_{\dot{H}^\rho}^2 \lesssim \|u\|_{H^\sigma}\,\|u\|_{\dot{H}^\rho}^2\,,
\end{equation}
for all $u=|\partial_y|^{1/2}w$ with $w$ a classical solution to \eqref{eq:amplitude}, where
$$\|u\|_{{H}^\sigma} := \left(\int_\R (1+|k|^{2})^{\sigma}\, |\hu(k)|^2\,\dif k\right)^{1/2}\,,\quad \|u\|_{\dot{H}^\rho} := \left(\int_\R |k|^{2\rho}\, |\hu(k)|^2\,\dif k\right)^{1/2}\,.$$
These estimates can be derived similarly as in \cite{Hunter2006}, by using the inequalities
\begin{equation}
\label{eq:est0}
\begin{array}[t]{r}|k|k|^{2\rho}+\ell|\ell|^{2\rho}+m|m|^{2\rho}|\lesssim |k|\,|\ell|^{\rho}\,|m|^\rho+|\ell|\,|m|^{\rho}\,|k|^\rho+|m|\,|k|^{\rho}\,|\ell|^\rho\,,\\ [5pt]
\forall (k,\ell,m)\,;k+\ell+m=0\,,
\end{array}
\end{equation}
\begin{equation}
\label{eq:est-1/2}
\begin{array}[t]{r}\dfrac{|k|k|^{2\rho}+\ell|\ell|^{2\rho}+m|m|^{2\rho}|}{(\min (|k|,|\ell|,|m|))^{1/2}}\lesssim |k|^{1/2}\,|\ell|^{\rho}\,|m|^\rho+|\ell|^{1/2}\,|m|^{\rho}\,|k|^\rho+|m|^{1/2}\,|k|^{\rho}\,|\ell|^\rho\,,\\ [5pt]
\forall (k,\ell,m)\,;k+\ell+m=0\,,\;k\ell m\neq 0\,,
\end{array}
\end{equation}
which both follow from \cite[Proposition 3]{Hunter2006}.
We thus see from the symmetry of $p=p_1+p_2$, the estimates of $p_1$ and $p_2$ in \Pun\, \Pde, and the general estimates in \eqref{eq:est0} and \eqref{eq:est-1/2} that
$$\frac{\dif }{\dif s} \|u\|_{\dot{H}^\rho}^2  \begin{array}[t]{l} =
2 \Real \left( i \iint p(-k,k-\ell,\ell)\,k|k|^{2\rho} \hu(-k,s) \,\hu(k-\ell,s)\,\hu(\ell,s)\,\dif \ell\,\dif k\right)  \\ [5pt]
=
-\frac23 \Real \begin{array}[t]{l}\left( i \iint p(-k,k-\ell,\ell)\,(-k|k|^{2\rho}+(k-\ell)|k-\ell|^{2\rho}+\ell|\ell|^{2\rho})\right. \\ [5pt]
\left.\,\hu(-k,s) \,\hu(k-\ell,s)\,\hu(\ell,s)\,\dif \ell\,\dif k\right)
\end{array}\\ [5pt]
\lesssim
\iint |k|^{1/2}(1+|k|) |\hu(-k,s)| \,|k-\ell|^{\rho} |\hu(k-\ell,s)|\,|\ell|^\rho \hu(\ell,s)\,\dif \ell\,\dif k\,,
\end{array}$$
where we have omitted to write the other two integrals since they are all equal to each other by the changes of variables $(-k,\ell)\mapsto (k-\ell,\ell)$ and  $(-k,\ell)\mapsto (-k,k-\ell)$.
Hence 
$$\frac{\dif }{\dif s} \|u\|_{\dot{H}^\rho}^2 \lesssim 
\|u\|_{\dot{H}^\rho}^2\;\int |k|^{1/2}(1+|k|) |\hu(-k,s)| \dif k\,$$
by the Fubini theorem and Cauchy--Schwarz inequality. This eventually gives \eqref{eq:estrho} since by the Cauchy-Schwarz inequality again
$$\int |k|^{1/2}(1+|k|) |\hu(-k,s)| \dif k\,\lesssim\,\|u\|_{H^\sigma}$$
for $\sigma>2$, this lower bound ensuring that  $\int |k|(1+|k|)^{2(1-\sigma)} \dif k<+\infty$.

Once we have these estimates, and similar ones concerning the `linearized' equation
$$\hu_s(k,s)\,-\,i\,k\,\int_{\R}\,p(-k,k-m,m)\, \hv(k-m,s)\,\hu(m,s)\,\dif m\,=\,0\,,$$
we can follow the same regularizing method as in \cite{Benzoni2009}, which then plays the role of the Galerkin method used in \cite{Hunter2006}.
\end{proof}

\begin{corollary}
In the framework of Theorem \ref{mainthm}, the amplitude equation \eqref{eq:amplitude} is locally well-posed in the  inverse image of $H^{\sigma}(\R))$ by $|\partial_y|^{1/2}$, for $\sigma>2$.
\end{corollary}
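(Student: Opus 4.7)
The plan is to derive the corollary as a direct translation of Theorem \ref{thm:wellposed} via the change of unknown $u = |\partial_y|^{1/2} w$. The key observation, already justified in the lead-up to Theorem \ref{thm:wellposed}, is that a smooth function $w$ solves the amplitude equation \eqref{eq:amplitude} if and only if $u := |\partial_y|^{1/2} w$ solves \eqref{eq:amplitudeb}. Indeed, writing \eqref{eq:amplitude} on the Fourier side and multiplying by $|k|^{1/2}$ transforms the kernel $b(-k,k-m,m)$ into $|k|^{1/2}\,b(-k,k-m,m) = |k|\,|m|^{1/2}|k-m|^{1/2}\,p(-k,k-m,m)$, which is precisely the symbol of the right-hand side of \eqref{eq:amplitudeb} once one substitutes $\hw(\xi) = |\xi|^{-1/2}\hu(\xi)$ for $\xi = k-m$ and $\xi = m$; the factor $-i\sgn(k)$ in $a(k)$ combines with the extra $|k|^{1/2}$ to produce the operator $\partial_y$ on the left-hand side. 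The equivalence is exact on Schwartz functions with Fourier transform vanishing at $k=0$.

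Next I would define the function space $X^\sigma := \{\, w\in \mathcal{S}'(\R) : |\partial_y|^{1/2} w \in H^\sigma(\R)\,\}$ endowed with the norm $\|w\|_{X^\sigma} := \|\,|\partial_y|^{1/2} w\,\|_{H^\sigma}$. By construction, the operator $|\partial_y|^{1/2} : X^\sigma \to H^\sigma(\R)$ is an isometric bijection. Therefore local well-posedness of \eqref{eq:amplitudeb} in $H^\sigma(\R)$, as established in Theorem \ref{thm:wellposed}, transfers verbatim to local well-posedness of \eqref{eq:amplitude} in $X^\sigma$: existence, uniqueness, and continuous dependence on initial data all pass through the isometry.

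More concretely, given an initial amplitude $w_0 \in X^\sigma$, we apply Theorem \ref{thm:wellposed} to the initial datum $u_0 := |\partial_y|^{1/2} w_0 \in H^\sigma(\R)$. This yields a unique solution $u \in \mathcal{C}([0,S];H^\sigma(\R))$ of \eqref{eq:amplitudeb} on a time interval $[0,S]$ whose length depends only on $\|u_0\|_{H^\sigma} = \|w_0\|_{X^\sigma}$. Setting $w(\cdot,s) := |\partial_y|^{-1/2} u(\cdot,s)$ gives an element of $\mathcal{C}([0,S];X^\sigma)$ which, by the equivalence recalled above, solves \eqref{eq:amplitude}. Uniqueness of $w$ follows from uniqueness of $u$, and continuity of the solution map $w_0 \mapsto w$ in $X^\sigma$ is inherited from the corresponding statement in $H^\sigma$.

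The only mild subtlety, not an obstacle but worth flagging, is the low-frequency behaviour of $|\partial_y|^{-1/2}$, which is not a bounded operator on $H^\sigma(\R)$; this is why one has to define $X^\sigma$ via the image norm rather than claim $w \in H^\sigma$. Solutions of \eqref{eq:amplitude} therefore live naturally in a homogeneous-type space at low frequencies. With this definition of $X^\sigma$, no additional analytical work is needed beyond Theorem \ref{thm:wellposed}, and the corollary is proved.
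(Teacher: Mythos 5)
Your proposal is correct and follows essentially the same route as the paper, which states the corollary as an immediate consequence of Theorem \ref{thm:wellposed} via the change of unknown $u=|\partial_y|^{1/2}w$ already set up before that theorem. Your explicit verification of the Fourier-side equivalence and your remark on the low-frequency behaviour of $|\partial_y|^{-1/2}$ simply make precise what the paper leaves implicit.
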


\begin{remark} {\rm In the case when $b_2=0$, the regularity index can be lowered by one. Indeed, the a priori estimates for  \eqref{eq:amplitudeb}  then reduce to 
$$\frac{\dif }{\dif s} \|u\|_{\dot{H}^\rho}^2  
\lesssim
\iint |k|^{1/2} |\hu(-k,s)| \,|k-\ell|^{\rho} |\hu(k-\ell,s)|\,|\ell|^\rho \hu(\ell,s)\,\dif \ell\,\dif k\,,
$$
so that we only need that 
$\int |k|^{1/2} |\hu(-k,s)| \dif k$
be finite for $u\in H^{\sigma}(\R))$, which is ensured as soon as $\sigma>1$.
}
\end{remark}

\begin{remark}\label{rem:Burgers} {\rm In the case when $b_1=0$,
there is another way to go from the amplitude equation \eqref{eq:amplitude} to an equation looking like \eqref{eq:amplitudeb}, which breaks the symmetry of the kernel but still yields a  well-posedness result. Let us recall indeed from the proof of Lemma \ref{lem:pp}  that
$$|b_2(\xi_1,\xi_2,\xi_3)| \lesssim  \frac{|\xi_1\xi_2\xi_3|}{|\xi_1| +|\xi_2|+|\xi_3|}\,,$$
hence
$$|b_2(\xi_1,\xi_2,\xi_3)| \lesssim {|\xi_2\xi_3|}\,,$$
which enables us to define a bounded kernel of degree zero $q$ by
$$q(k-\ell,\ell):= \frac{b(-k,k-\ell,\ell)}{|k-\ell||\ell|}\,.$$
In addition, this kernel satisfies the crucial estimate
\begin{equation}\label{eq:crucial}
|q(k,\ell)-q(-\ell-k,\ell)|\lesssim |\ell/k|\,, \;0<|\ell|<|k|\,.
\end{equation}
This comes from the fact that, by the symmetry of $b$,
$$|k| q(k,\ell) = \frac{b(-k-\ell,k,\ell)}{|\ell|}=\frac{b(k,-k-\ell,\ell)}{|\ell|} =|k+\ell| q(-\ell-k,\ell)\,,$$
hence
$$|q(k,\ell)-q(-\ell-k,\ell)|=\frac{|q(-\ell-k,\ell)|}{|k|}\,||k+\ell|-|k||\leq \|q\|_{L^\infty}\,\frac{|\ell|}{|k|}\leq\,\|q\|_{L^\infty}\,, \;0<|\ell|<|k|\,.$$
Now, for $w$ to solve \eqref{eq:amplitude}, $v:=|\partial_y|w$ must solve the \emph{nonlocal Burgers equation}
\begin{equation}\label{eq:amplitudeB}
v_s = ( {\mathcal Q}[v])_y\,,
\end{equation}
where ${\mathcal Q}$ is defined by
$$\widehat{{\mathcal Q}[v]}(k)=\int q(k-m,m)\,\hv(k-m)\,\hv(m)\,\dif m\,.$$
By substituting \eqref{eq:crucial} for the piecewise ${\mathcal C}^1$ assumption on $q$ in the main result in \cite{Benzoni2009}, we can still prove by the same method that \eqref{eq:amplitudeB} is locally well-posed in $H^2(\R)$. See \cite{Marcou} for a similar result in a periodic setting. This result might even be extended to fractional regularity indices $s>3/2$, as for the standard Burgers equation. This would eventually yield local well-posedness for \eqref{eq:amplitude} in the inverse image of $H^{s}(\R))$ by $|\partial_y|$. This is a slightly smaller subspace of $H^{s+1}(\R)$ than the  inverse image of $H^{s+1/2}(\R)$ by $|\partial_y|^{1/2}$, the difference coming only from the low frequency behavior of their elements.
}
\end{remark}

\begin{remark}{\rm 
A piecewise continuous kernel satisfying \eqref{eq:crucial} automatically satisfies Hunter's stability condition
$$\Hunter \qquad  q(1,0+)=q(-1,0+)\,,$$
which was coined in \cite{Hunter}. `Conversely', as was pointed out in \cite{Benzoni2009}, piecewise ${\mathcal C}^1$  kernels satisfying Hunter's stability condition automatically satisfy \eqref{eq:crucial}.}
\end{remark}

\begin{remark}{\rm 
Even though the Oseen--Frank energy considered in \cite{Saxton,AliHunter-crystals} satisfies our general assumptions, the associated model, the so-called director-field system, does not readily falls within our framework because of the constraint $|\bn|=1$. However, the kernel obtained in \cite{Austria} has the expected properties, namely, homogeneity degree one, and \Pun\ for the associated, rescaled kernel.
}
\end{remark}

\section{Surface waves at reversible phase boundaries}\label{s:pt}

\subsection{Phase boundaries \emph{versus} classical shocks}
This part is devoted to a fluid model that can also be derived from a variational principle, but not as a 
simple one as in the previous section, unfortunately. That model describes the dynamics of reversible 
isothermal phase boundaries in compressible fluids. Mathematically, it amounts to a quasilinear, free 
boundary hyperbolic problem in which phase boundaries can be viewed as \emph{undercompressive 
shocks} - which means that the number of outgoing characteristics is not lower than the number of 
incoming ones, contrary to what happens for classical shocks. The linearized problems about planar 
phase boundaries were investigated in \cite{Benzoni1998}, where linear surface waves were found by 
explicit computations. It is notable that no such waves exist for classical - or Lax -  shocks in compressible 
fluids. More precisely, there are no \emph{neutral modes} associated with Lax shocks in ideal gases at all, 
and there can only be neutral modes of \emph{infinite energy} in gases obeying more general pressure 
laws (see \cite[\S~15.2]{BS}). By neutral modes we mean here solutions of linearized problems about 
planar shocks that oscillate as $\ee^{i(\tau t + \bfeta\cdot \bx)}$ in the direction of shock fronts. Those 
of infinite energy also oscillate in the transverse direction to shock fronts, unlike genuine surface waves.

The difference between Lax shocks and phase boundaries can be explained from various perspectives. 
As far as neutral modes are concerned, we can invoke the fact that the energy - in fact the sum of the 
kinetic energy density and the free energy density - is conserved across reversible isothermal phase 
boundaries. This implies that the associated free boundary value problems can be derived from a variational 
principle for the Lagrangian
$$\Lag[\rho,\vit]:=\int_{0}^{T}\int_{\Omega} \left(\frac12 \rho |\vit|^2-F(\rho,\theta)\right)\,\dif \bx\,\dif t\,,$$
where $\rho$ denotes the density, $\vit$ the velocity, and $F(\rho,\theta)$ the free energy density at 
temperature $\theta$. More precisely, we can derive the conservation of the momentum $\rho\vit$ and of 
the total energy $\frac12 \rho |\vit|^2+F(\rho,\theta)$ by assuming that $\Lag$ has a critical point at a state 
$(\rho,\vit,\theta)$ such that $\theta$ is transported by $\vit$ and the pair $(\rho,\vit)$ satisfies the continuity 
equation 
$$\partial_t\rho+\nabla\cdot(\rho\vit)=0\,.$$
Indeed, it suffices to substitute free energy and temperature for internal energy and entropy in \cite{Serre-M2AN}. 
Even though there are whole books on variational principles (for instance the valuable monographs 
\cite{BerdichevskyI,BerdichevskyII}), up to the authors' knowledge and in their opinion, Serre's approach 
yields the most rigorous variational derivation of conservation laws in fluids. It is even valid for weak solutions, 
and thus shows local conservation laws as well as jump conditions. As pointed out in \cite[p.746]{Serre-M2AN}, 
the equations derived this way do not support classical shocks in barotropic fluids - in particular isothermal 
ones -, since the energy is not conserved across those shocks. In other words, we cannot say that isothermal 
Lax shocks are governed by a variational principle. By contrast, reversible isothermal phase boundaries are, 
and this variational nature supports the existence of surface waves, even though it does not obviously follow 
from a general theory as in \cite{Serre-JFA} - reported as Theorem \ref{thm:Serre} in the present paper. 
(One may observe that Lax shocks in non-barotropic fluids are governed by a variational principle, as shown 
in \cite{Serre-M2AN}, and are not associated with surface waves.)

\subsection{Glimpse of the amplitude equation}

Weakly nonlinear surface waves for free boundary value problems were addressed in \cite{BenzoniRosini}, 
in the same spirit as in the seminal work by Hunter \cite{Hunter} for hyperbolic boundary value problems with 
fixed boundaries. In particular, an amplitude equation was derived for weakly nonlinear surface waves 
associated with reversible, isothermal phase boundaries. Our aim here is to prove that this amplitude equation 
is actually well-posed.

In fact, as explained in \S~\ref{ss:derpt} below, the amplitude equation for this problem is `readily' found - up to 
nevertheless lengthy calculations - in nonlocal Burgers form \eqref{eq:amplitudeB}. Furthermore, its kernel $q$ 
is piecewise continuous and satisfies the estimate in \eqref{eq:crucial}. The latter is indeed a consequence - as 
in Remark \ref{rem:Burgers} above - of the boundedness of $q$ and its additional `symmetry'
\begin{equation}\label{eq:crucialsym}
|k| q(k,\ell) = |k+\ell| q(-\ell-k,-\ell)\,,\;\forall k,\ell\,;\;k\ell(k+\ell)\neq 0\,.
\end{equation}
More precisely, this kernel turns out to be of the form
\begin{equation}\label{eq:form}
q(k,\ell)=\left\{\begin{array}{l} \gamma\,,\;k>0\,,\;\ell>0\,,\\
\overline{\gamma}\,(1+\ell/k)\,,\;k>0\,,\;\ell<0\,,\;k+\ell>0\,,
\end{array}\right. 
\end{equation}
for some complex number $\gamma$, 
the values of $q$ in the other parts of $${\mathbb P}:=\R^2\backslash\{(k,\ell)\,;\;k\ell(k+\ell)\neq 0\}$$ being 
determined by the properties
\begin{equation}\label{eq:prop}
q(k,\ell)=q(\ell,k)\,,\;q(-k,-\ell)=\overline{q(k,\ell)}\,,\quad\forall k,\ell\,;\;k\ell(k+\ell)\neq 0\,.
\end{equation}
By direct inspection of the values of $q$ in the six connected parts of ${\mathbb P}$, which read
\begin{equation}\label{eq:qintegral}
q(k,\ell)=\left\{\begin{array}{l} \gamma\,,\;k>0\,,\;\ell>0\,,\\
\overline{\gamma}\,,\;k<0\,,\;\ell<0\,,\\
\overline{\gamma}\,(1+\ell/k)\,,\;k>0\,,\;\ell<0\,,\;k+\ell>0\,,\\
{\gamma}\,(1+\ell/k)\,,\;k<0\,,\;\ell>0\,,\;k+\ell<0\,,\\
\overline{\gamma}\,(1+k/\ell)\,,\;k<0\,,\;\ell>0\,,\;k+\ell>0\,,\\
{\gamma}\,(1+k/\ell)\,,\;k>0\,,\;\ell<0\,,\;k+\ell<0\,,\end{array}\right.
\end{equation}
we arrive indeed at the elementary result.

\begin{lemma}\label{lem:elem}
A function $q:\R^2\backslash\{(k,\ell)\,;\;k\ell(k+\ell)\neq 0\} \to \C$ satisfying \eqref{eq:form} and \eqref{eq:prop} 
is piecewise continuous, satisfies \eqref{eq:crucialsym}, and thus is bounded and satisfies \eqref{eq:crucial}.
\end{lemma}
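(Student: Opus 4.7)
The plan is direct case analysis on the six open sectors of ${\mathbb P}$. First I would read off the values of $q$ on the two sectors $\{k>0,\ell>0\}$ and $\{k>0,\ell<0,\,k+\ell>0\}$ directly from \eqref{eq:form}, then determine $q$ on the four remaining sectors by applying the two symmetries in \eqref{eq:prop}: the conjugation identity yields the antipodal sectors, while the swap $q(k,\ell)=q(\ell,k)$ produces the two sectors obtained by reflection across the diagonal $k=\ell$. The consistency check, that the two natural paths from \eqref{eq:form} to any given sector produce the same value, is immediate since the two symmetries in \eqref{eq:prop} commute. This exactly reproduces the explicit table \eqref{eq:qintegral}.

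With \eqref{eq:qintegral} in hand, piecewise continuity and boundedness are both immediate. On each open sector of ${\mathbb P}$, $q$ restricts to a rational function of $(k,\ell)$ that has no singularity there, hence is $C^\infty$. For the global bound, on the two constant sectors $|q|=|\gamma|$, while on each of the other four sectors the ratio $\ell/k$ (or $k/\ell$) lies in the open interval $(-1,0)$, so the factor $|1+\ell/k|$ (respectively $|1+k/\ell|$) stays in $(0,1)$; hence $\|q\|_{L^\infty}\leq|\gamma|$. The symmetry \eqref{eq:crucialsym} is then checked by running through the six base sectors: for each one, I identify the sector containing the transformed point, read off both values of $q$ from \eqref{eq:qintegral}, and verify the identity by a short arithmetic manipulation.

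Finally, \eqref{eq:crucial} is a free consequence of \eqref{eq:crucialsym} together with the global bound on $q$, by exactly the argument already used at the end of Remark \ref{rem:Burgers}: rewriting \eqref{eq:crucialsym} expresses the relevant difference as a product of $\bigl|\,|k+\ell|-|k|\,\bigr|/|k|$ with a value of $q$, and the reverse triangle inequality bounds the first factor by $|\ell/k|$, giving the estimate as soon as $0<|\ell|<|k|$. The only real difficulty in the whole argument is bookkeeping: keeping track of the six sectors of ${\mathbb P}$, and making sure that the two symmetries in \eqref{eq:prop} are applied consistently when propagating the values of $q$ from the two defining sectors out to the remaining ones. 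There is no substantive analytic content.
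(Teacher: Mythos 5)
Your route is the same as the paper's, which itself only records the outcome of a direct inspection: propagate \eqref{eq:form} to the remaining four sectors via the two symmetries in \eqref{eq:prop} to obtain the table \eqref{eq:qintegral}, read off piecewise smoothness and the bound $\|q\|_{L^\infty}\leq |\gamma|$ (your observation that $\ell/k$, resp.\ $k/\ell$, lies in $(-1,0)$ on the four non-constant sectors is exactly the right point), and conclude with the computation at the end of Remark \ref{rem:Burgers}. The consistency issue you raise is indeed vacuous, though not quite for the reason you give: the group generated by the swap and the conjugation has order four, the orbit of the defining sector $\{k>0,\ell<0,k+\ell>0\}$ consists of four distinct sectors (trivial stabilizer), and the only sectors with nontrivial stabilizer are the two quadrants, on which the constant values are trivially swap-invariant; so no nontrivial compatibility condition ever arises.

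The one place where your write-up would hit a snag is the step you dismiss as ``a short arithmetic manipulation''. The identity \eqref{eq:crucialsym} as printed, with second argument $-\ell$, is \emph{not} satisfied by the kernel \eqref{eq:qintegral}: already for $k,\ell>0$ the point $(-\ell-k,-\ell)$ lies in the third quadrant, so the printed identity would require $k\,\gamma=(k+\ell)\,\overline{\gamma}$. What the sector-by-sector check does confirm is the version with second argument $\ell$, namely $|k|\,q(k,\ell)=|k+\ell|\,q(-\ell-k,\ell)$, i.e.\ exactly the relation appearing in Remark \ref{rem:Burgers}; this is clearly the intended reading of \eqref{eq:crucialsym} (a sign slip in the second argument). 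The distinction also matters for your final step: the printed identity would only control the difference $q(k,\ell)-q(-\ell-k,-\ell)$, which is not the difference in \eqref{eq:crucial}, whereas the corrected identity gives $q(k,\ell)-q(-\ell-k,\ell)=\bigl(|k+\ell|-|k|\bigr)\,q(-\ell-k,\ell)/|k|$, and hence \eqref{eq:crucial} by the reverse triangle inequality and the $L^\infty$ bound, exactly as you describe. So your plan is sound and coincides with the paper's, but because you never performed the arithmetic you did not notice that the identity you claim to verify is not the one you would actually find; once the second argument is corrected, everything you wrote goes through.
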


Therefore, by repeating the proof of \cite[Theorems 3.1 \& 3.2]{Benzoni2009} under the sole assumption 
that $q$ is piecewise continuous and satisfies \eqref{eq:crucial} - instead of the piecewise ${\mathcal C}^1$ 
assumption together with Hunter's stability condition \Hunter\ - we find that the amplitude equation
$v_s = ( {\mathcal Q}[v])_y$ with 
$$
\widehat{{\mathcal Q}[v]}(k)=\int q(k-m,m)\,\hv(k-m)\,\hv(m)\,\dif m\,,
$$
and $q$ defined by \eqref{eq:form} and \eqref{eq:prop} is locally well-posed in $H^2(\R)$.

\subsection{Linear surface waves}\label{ss:sw}

Before turning to more details on weakly nonlinear analysis, let us recall the fully nonlinear model under 
consideration, and explain where the linear surface waves come from. This model is based on the isothermal 
Euler equations
\begin{equation}
\label{eq:euler}
\begin{cases}
\partial_t \rho +\nabla \cdot (\rho \, \vit) =0 \, ,& \\
\partial_t (\rho \, \vit) +\nabla \cdot (\rho \, \vit \otimes \vit) +\nabla p(\rho) =0 \, ,& 
\end{cases}
\end{equation}
where $p:=\rho \partial_\rho F -F$, of which smooth solutions are known to satisfy the local conservation law 
for the energy
\begin{equation}
\label{eq:energy}
\partial_t \big(\tfrac12 \rho \, |\vit|^2+F(\rho)\big) 
+\nabla \cdot \Big(\big(\tfrac12 \rho \, |\vit|^2+F(\rho)+p(\rho)\big) \vit \Big)=0\,.
\end{equation}
We have omitted to write the dependency of $F$ and $p$ on the temperature $\theta$, because $\theta$ is fixed. 
Phase boundaries arise when $p$ is not a monotone function of $\rho$ - for instance when $p$ obeys the van 
der Waals law under the critical temperature, but this specific example does not play any role in what follows. 
Mathematically speaking, isothermal reversible phase boundaries correspond to piecewise smooth weak 
solutions to \eqref{eq:euler}-\eqref{eq:energy}.

The starting point of the analysis is a pair of reference states $(\rho_\ell,\vit_\ell)$ and $(\rho_r,\vit_r)$  that 
correspond to a planar propagating discontinuity solving both \eqref{eq:euler} and \eqref{eq:energy} in a weak 
sense. For such a discontinuity to propagate at speed $\sigma$ in a direction $\bnu\in \R^d$, the states must 
satisfy the jump conditions
\begin{equation}
\label{eq:eulerjump}
[\rho (\vit\cdot \bnu-\sigma)]=0\,,\;[\rho (\vit\cdot \bnu-\sigma)\,\vit\,+\,p(\rho)\,\bnu]=0\,,
\end{equation}
\begin{equation}
\label{eq:energyjump}
[ (\vit\cdot \bnu-\sigma)\,(\tfrac12 \rho \, |\vit|^2+F(\rho)\big)\,+\,p(\rho)\,\vit\cdot\bnu]=0\,,
\end{equation}
where bracket expressions $[q]$ stand as usual for $q_r-q_\ell$. The two jump conditions in \eqref{eq:eulerjump} 
are the usual Rankine--Hugoniot conditions for \eqref{eq:euler}. The third jump condition \eqref{eq:energyjump} 
is \emph{not} satisfied by classical shock wave solutions to \eqref{eq:euler}. Given \eqref{eq:eulerjump}, 
\eqref{eq:energyjump} amounts to an \emph{equal area rule} in the thermodynamic variables $(1/\rho,p)$, which 
cannot be satisfied when $p(\rho)$ is increasing with $\rho$ and the mass flux
$$j:=\rho (\vit\cdot \bnu-\sigma)$$
across the discontinuity is nonzero. On the contrary, when $p(\rho)$ is decreasing on some interval and increasing 
outside this interval, we can find states $(\rho_\ell,\vit_\ell)$ and $(\rho_r,\vit_r)$ satisfying the three jump conditions 
- one may think of the smallest of the densities $\rho_\ell$ and $\rho_r$ as that of the gas phase, and the largest 
one as that of the liquid phase. In addition, we can find such states that are both \emph{subsonic} with respect to 
the front of discontinuity, which means that 
$$|\vit_{\ell,r}\cdot \bnu-\sigma|<c_{\ell,r}:=\sqrt{p'(\rho_{\ell,r})}\,.$$
The subscripts $\ell$ and $r$ here above may be thought of as abbreviations for `left' and `right', even though this 
is not meaningful in several space dimensions. For classical shocks we usually prefer the terms `behind' and `ahead', 
the state behind being subsonic and the state ahead of the shock being supersonic. For phase boundaries, both 
states being subsonic it is more natural to think of the `liquid' state and the `vapor' state to distinguish between them. 
However, the phase boundaries we consider are completely \emph{reversible}, which means that the states 
$(\rho_\ell,\vit_\ell)$ and $(\rho_r,\vit_r)$ can exchanged with each other.

From now on, we fix states like this, with $j\neq 0$. These are a basic example of what we call isothermal reversible 
\emph{dynamical} phase boundaries. The term `dynamical' refers to the fact that there is a nonzero mass flux $j$ 
across the boundary. Dynamical phase boundaries share the property $j\neq 0$ with classical shocks, whereas 
\emph{static} phase boundaries (with $j=0$) would be \emph{contact discontinuities}.

For convenience we denote by $$u_{\ell,r}:=\vit_{\ell,r}\cdot \bnu-\sigma$$ the relative velocities of the fluid with 
respect to the front of discontinuity, which we assume to be both positive without loss of generality - observe that 
$\rho_\ell u_{\ell}=\rho_r u_{r}=j\neq 0$ by assumption, and that the equations of motion 
\eqref{eq:euler}-\eqref{eq:energy}-\eqref{eq:eulerjump}-\eqref{eq:energyjump}  are invariant under the orthogonal 
symmetry defined by $\bnu$.

In the terminology of conservation laws, isothermal reversible dynamical phase boundaries are 
\emph{noncharacteristic}, and called \emph{undercompressive} because of the inequalities
$$u_{\ell,r} - c_{\ell,r} < 0 < u_{\ell,r} < u_{\ell,r} + c_{\ell,r}\,,$$
which mean that the number of characteristics of the isothermal Euler equations \eqref{eq:euler} is preserved 
across the front. This feature requires an `additional' jump condition apart from the Rankine--Hugoniot conditions. 
The conservation of energy in  \eqref{eq:energyjump}  provides such an additional jump condition.

The main question here is whether a planar propagating phase boundary persists under small perturbations of 
the front location and of the states on either side. This is a free boundary problem, which was first addressed in 
\cite{Benzoni1998}. (The same question for classical shocks was pointed out and solved by Majda in the 1980s.) 
The unknowns are the density and the velocity of the fluid on either side of the unknown boundary, supposedly 
close to $(\rho_\ell,\vit_\ell)$ and $(\rho_r,\vit_r)$ respectively, together with a scalar function $\Phi(\bx,t)$ close 
to $(\bx\cdot \bnu - \sigma t)$ such that $\Phi(\bx,t)=0$ is an equation for the unknown boundary. By a change 
of space-time variables depending on $\Phi$, the free boundary problem consisting of \eqref{eq:euler} - and 
thus automatically \eqref{eq:energy} - on either side of the boundary together with 
\eqref{eq:eulerjump}-\eqref{eq:energyjump} across the boundary can be changed into a boundary value problem 
with a fixed planar boundary of equation $z=0$. This boundary value problem can then be linearized about the 
reference solution corresponding to the discontinuous solution connecting $(\rho_\ell,\vit_\ell)$ to $(\rho_r,\vit_r)$. 
It is for this problem, referred to as (LBVP) hereafter, that surface waves were found in \cite{Benzoni1998}.

Without recalling all the notations from the earlier papers \cite{Benzoni1998,BenzoniRosini}, we can give an 
explicit form of surface waves. For $\bfeta\neq 0$ in the cotangent space to the fixed boundary, there exists 
$\tau\in \R\backslash \{0\}$ and a nontrivial solution to (LBVP) of the form $\ee^{i(\tau t+\bfeta\cdot \bx)} 
(R(z),\bU(z))$ with $(R(z),\bU(z))$ going to zero when $|z|$ goes to infinity, and more precisely
$$
(R(z),\bU(z))=\left\{\begin{array}{ll} 
\gamma_1\,\ee^{-\beta_1 z}\;(-i\, \tau +u_\ell \, \beta_1,i \, c_\ell^2 \, \bfeta - a_\ell \,\bnu)\;\,,& z<0\,,\\
\gamma_2\,\ee^{\beta_2 z}\;(-i\, \tau -u_r\, \beta_2,i \, c_r^2 \, \bfeta - a_r \,\bnu)\;\,,& z>0\,,
\end{array}\right.
$$
where
\begin{align}
& \beta_1 :=\dfrac{a_\ell -i\, u_\ell \, \tau}{c_\ell^2 -u_\ell^2} \, ,&
a_\ell :=-c_\ell \, \sqrt{(c_\ell^2 -u_\ell^2) \, |\bfeta|^2 -\tau^2} \, ,\notag\\
& \beta_2:=\dfrac{-a_r +i\, u_r \, \tau}{c_r^2 -u_r^2} \, ,&
a_r :=c_r \, \sqrt{(c_r^2 -u_r^2) \, |\bfeta|^2 -\tau^2} \, .\notag
\end{align}
With these notations the dispersion relation satisfied by $(\tau,\bfeta)$ reads
\begin{equation}
\label{LopatinskiiEuler}
u_\ell \, u_r \, a_\ell \, a_r +c_\ell^2 \, c_r^2 \, \tau^2 =0 \,,
\end{equation}
where the left-hand side is proportional to the Lopatinskii determinant  $\Delta(\tau,\bfeta)$ associated with (LBVP).

The coefficients $(\gamma_1,\gamma_2)$ are of course linked to each other through the linearized jump conditions.
Here we adopt a slightly different normalization compared with \cite[Section 3, Eq.~(3.51)]{BenzoniRosini} and rather 
choose
\begin{equation*}
\gamma_1 :=\dfrac{(\rho_r-\rho_l)\, u_r \, \tau}{u_r \, a_\ell -i\, c_\ell^2 \, \tau} \, ,\quad 
\gamma_2 :=\dfrac{-(\rho_r-\rho_l) \, u_\ell \, \tau}{u_\ell \, a_r -i\, c_r^2 \, \tau} \, .
\end{equation*}
This choice ensures that the scalar amplitude $w$ by means of which we can entirely determine the principal term 
in the weakly nonlinear asymptotic expansion of the fully nonlinear free boundary problem merely coincides with the 
partial derivative $\partial_y \chi$, where ${\chi}$ denotes the (first order) perturbation of $\Phi$ - recall that $\Phi=0$ 
is the free boundary equation - and $y$ is a placeholder for the phase $\tau \, t +\bfeta \cdot \bx$.

\subsection{Well-posedness of the amplitude equation}\label{ss:derpt}

Following \cite[Proposition 2.2]{BenzoniRosini}, we find that the evolution of the amplitude $w$ is then governed by 
a nonlocal Burgers equation
\begin{equation}
\label{BurgersEuler}
a_0(k) \, \widehat{w}_s (k,s) +\int_\R a_1(k-m,m) \, \widehat{w} (k-m,s) \, \widehat{w} (m,s) \, {\rm d}m =0 \, ,
\end{equation}
where $a_0$ and $a_1$ are given by Equations (2.24) and (2.25) of \cite[page 1471]{BenzoniRosini}. We are mainly 
concerned here with the structural properties of the amplitude equation \eqref{BurgersEuler}. Detailed computations 
leading to the final form of $a_0$ and $a_1$ given below can be found in a companion paper that is available online \cite{BenzoniCoulombel-note}.

The function $a_0$ in \eqref{BurgersEuler} is found to be of the form
\begin{equation*}
\forall \, k \neq 0 \, ,\quad a_0(k) =\dfrac{\Theta \, \alpha_0}{i\, k} \, ,\quad 
\alpha_0 := -\dfrac{1}{\tau} \, \left\{ u_\ell^2 \, u_r^2 \, \left( 
\dfrac{a_\ell^2}{c_\ell^2} +\dfrac{a_r^2}{c_r^2} \right) +2\, c_\ell^2 \, c_r^2 \, \tau^2 \right\} \, ,
\end{equation*}
where $\Theta$ is a nonzero real number, and $\alpha_0$ coincides with the $\tau$-derivative of the left-hand 
side in \eqref{LopatinskiiEuler}. This means that $a_0$ is proportional to $\partial_\tau\Delta(\tau,\bfeta)$, the 
$\tau$-derivative of the Lopatinskii determinant. This is consistent with our findings in \cite{BenzoniCoulombel}, 
even though the present framework does not fit those considered there, in particular because the pair $(\tau,\bfeta)$ 
is not in the elliptic region - transversally oscillating modes exist for $(\tau,\bfeta)$ but they are not present in the 
surface waves. The fact that $\Theta \alpha_0$ is obviously nonzero - as the product of a nonzero real number 
with the sum of positive real numbers - is reminiscent of the observation made for `standard' variational problems 
in \S~\ref{ss:deramp}, for which the amplitude equation is automatically evolutionary ($a(k)\neq 0$ for $k\neq 0$).

The expression of $a_1$ is given by:
\begin{equation*}
\dfrac{4\, \pi}{\Theta} \, a_1(k,m) =\begin{cases}
\alpha_1 & \text{if $k,m>0$,} \\
\overline{\alpha_1} \, (1+m/k) & \text{if $k>0$, $m<0$, $k+m>0$,}
\end{cases}
\end{equation*}
with
\begin{align*}
\alpha_1 := &
\dfrac{2}{(u_r-u_l)} \, (\tau^2 +u_\ell \, u_r \, |\bfeta|^2) \, i \, c_\ell^2 \, c_r^2 \, \tau\, 
\left( \dfrac{c_r^2 \, \gamma_2}{\rho_r \, u_r} -\dfrac{c_\ell^2 \, \gamma_1}{\rho_\ell \, u_\ell} \right)\\
&+ \left( \dfrac{p''(\rho_\ell)}{2} +\dfrac{c_\ell^2}{\rho_\ell} \right) \, u_\ell \, u_r \, \dfrac{a_r}{a_\ell} \, 
(\tau^2 +u_\ell^2 \, |\bfeta|^2) \, \gamma_1 \, (i\, \tau -u_\ell \, \beta_1) \\
& +\left( \dfrac{p''(\rho_r)}{2} +\dfrac{c_r^2}{\rho_r} \right) \, u_\ell \, u_r \, \dfrac{a_\ell}{a_r} \, 
(\tau^2 +u_r^2 \, |\bfeta|^2) \, \gamma_2 \, (i\, \tau +u_r \, \beta_2)\,.
\end{align*}
The above expressions for $a_0$ and $a_1$ show that \eqref{BurgersEuler} can be rewritten under the form
\begin{equation}\label{eq:defBurgers}
\widehat{w}_s (k,s) +i \, k \, \int_\R q(k-m,m) \, \widehat{w} (k-m,s) \, \widehat{w} (m,s) \, {\rm d}m =0 \, ,
\end{equation}
where $q$ satisfies \eqref{eq:form} with $\gamma:=\alpha_1/(4\, \pi \, \alpha_0)$.

Our definitive results can be thus summarized as follows.

\begin{theorem} For any planar discontinuity between states $(\rho_\ell,\vit_\ell)$ and $(\rho_r,\vit_r)$ that 
propagates at speed $\sigma$ in a direction $\bnu\in \R^d$ and solves \eqref{eq:eulerjump}-\eqref{eq:energyjump} 
with 
$$0<|\vit_{\ell,r}\cdot \bnu-\sigma|<\sqrt{p'(\rho_{\ell,r})}\,,$$
for all $(\tau,\bfeta)$ satisfying \eqref{LopatinskiiEuler}, there is a one-dimensional space of linear surface waves 
associated with the time frequency $\tau$ and the wave vector $\bfeta$ that solve a linearized version of 
\eqref{eq:euler}-\eqref{eq:eulerjump}-\eqref{eq:energyjump} about that planar discontinuity. The associated weakly 
nonlinear surface waves are governed by a nonlocal Burgers equation \eqref{eq:defBurgers} in which the kernel $q$ 
is of the form given in \eqref{eq:qintegral}. In particular, $q$ is bounded, satisfies the estimate in \eqref{eq:crucial}, 
and the nonlocal Burgers equation \eqref{eq:defBurgers} is locally well-posed in $H^2(\R)$.
\end{theorem}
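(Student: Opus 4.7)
The plan is to assemble the statement from four essentially independent pieces that have been prepared in the preceding subsections. First I would dispatch the part concerning \emph{existence and one-dimensionality of linear surface waves}: this is already built into the explicit normal-mode ansatz recalled in \S~\ref{ss:sw}, where the profile $(R(z),\bU(z))$ on each side of $\{z=0\}$ is parametrized by a single scalar ($\gamma_1$ on the left, $\gamma_2$ on the right), and the two linearized jump conditions reduce, under the dispersion relation \eqref{LopatinskiiEuler}, to a single compatibility relation fixing the ratio $\gamma_1/\gamma_2$. Hence for every $(\tau,\bfeta)$ solving \eqref{LopatinskiiEuler} the solution space of (LBVP) of the prescribed oscillatory form is exactly one-dimensional.

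Next I would recall the \emph{derivation of the amplitude equation} in the form \eqref{BurgersEuler}. Following \cite[Proposition~2.2]{BenzoniRosini}, the weakly nonlinear expansion around the planar discontinuity produces a solvability condition at second order whose Fourier form is precisely \eqref{BurgersEuler} with coefficients $a_0$ and $a_1$ as stated. The (somewhat lengthy) computation reducing those coefficients to the explicit expressions involving $\alpha_0$ and $\alpha_1$ is the content of the companion note \cite{BenzoniCoulombel-note}; I would simply invoke this and note that the crucial point is that $\alpha_0$ is \emph{nonzero}, as it equals $\partial_\tau\Delta(\tau,\bfeta)$ up to sign and a sum of manifestly positive contributions. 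Dividing \eqref{BurgersEuler} by $a_0(k)=\Theta\alpha_0/(ik)$ yields \eqref{eq:defBurgers} with $\gamma:=\alpha_1/(4\pi\alpha_0)$, and inspection of the piecewise definition of $a_1$ together with the normalization already fixing the other four sectors via the symmetries \eqref{eq:prop} gives the form \eqref{eq:form}, hence the extended formula \eqref{eq:qintegral}.

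Once the kernel has been brought into the form \eqref{eq:form}--\eqref{eq:prop}, the \emph{structural estimates} on $q$ are purely elementary and are exactly the content of Lemma~\ref{lem:elem}: piecewise continuity and the symmetry \eqref{eq:crucialsym} follow from direct case inspection on the six sectors of $\mathbb P$, and \eqref{eq:crucialsym} together with the boundedness of $q$ yields \eqref{eq:crucial} by the short computation already given in Remark~\ref{rem:Burgers}. Finally, for the \emph{well-posedness} in $H^2(\R)$, I would invoke the extension of \cite[Theorems~3.1 \& 3.2]{Benzoni2009} already discussed just after Lemma~\ref{lem:elem}: the proof there goes through verbatim once the piecewise ${\mathcal C}^1$ assumption combined with Hunter's stability condition \Hunter\ is replaced by `piecewise continuous plus \eqref{eq:crucial}', because what is actually used in the a priori estimates is \eqref{eq:crucial} itself and not the ${\mathcal C}^1$ regularity.

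The only non-routine step is the algebraic reduction of the raw coefficient $a_1$ (given by a sum of boundary and interior contributions in \cite{BenzoniRosini}) to the clean piecewise form \eqref{eq:form}: one must check by hand that the various quadratic combinations of surface-wave profiles integrate to expressions that depend on the frequencies only through the constant $\gamma$ (when $k,m$ have the same sign) and through the affine factor $(1+m/k)$ (when they differ in sign). This is the step most likely to obscure structure if not done carefully; I would organize it by grouping terms according to the sign pattern of $(k,m,k+m)$, using the explicit exponential form of the profiles $\rho_j\propto e^{\pm\beta_j z}$ to perform the $z$-integrals and repeatedly exploiting $\rho_\ell u_\ell=\rho_r u_r=j$ and \eqref{LopatinskiiEuler}. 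The rest is bookkeeping, and the final well-posedness statement is then a direct citation.
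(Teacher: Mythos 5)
Your proposal is correct and follows essentially the same route as the paper: the theorem is assembled exactly as you describe, with the one-dimensional surface-wave space coming from the explicit normal-mode construction of \S~\ref{ss:sw}, the amplitude equation and the explicit coefficients $a_0$, $a_1$ taken from \cite{BenzoniRosini} and the companion note \cite{BenzoniCoulombel-note}, the kernel structure handled by Lemma~\ref{lem:elem}, and well-posedness obtained by rerunning \cite[Theorems~3.1 \& 3.2]{Benzoni2009} under the weakened hypothesis ``piecewise continuous plus \eqref{eq:crucial}''. No gaps beyond the computational reduction that the paper itself delegates to \cite{BenzoniCoulombel-note}.
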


\section*{Appendix}

\setcounter{proposition}{0}
\renewcommand{\theproposition}{A.\arabic{proposition}}
\setcounter{equation}{0}
\renewcommand{\theequation}{A.\arabic{equation}}

\begin{proposition}\label{prop:varderham}
Let $b:\R^3\to \C$ be a symmetric, continuous function outside $Z:=\{(k,\ell,m)\,;\;k\ell m = 0\}$ and such that, 
for all $(k,\ell,m)\in \R^3\backslash Z$,
$$b(-k,-\ell,-m)=\overline{b(k,\ell,m)}\,,$$ 
$$|b(k,\ell,m)|\,\leq\,C(1+k^2+\ell^2+m^2)\,,$$ 
with $C$ a positive constant. Then the functional
$${\mathcal T}[w]=\frac{1}{3}\,\iint b(-k-m,k,m)\,\hw(-k-m)\,\hw(k)\,\hw(m)\,\dif k\,\dif m\,$$
is well-defined on $H^1(\R)$, and its variational derivative $\delta {\mathcal T}$  is given by
$$\widehat{\delta {\mathcal T}[w]}(k)=2\pi\,\int b(-k,k-m,m)\,\hw(k-m)\,\hw(m)\,\dif m\,,\;\forall w\in H^2(\R;\R)\,.$$
\end{proposition}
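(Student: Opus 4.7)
The plan has two steps: first, prove that $\mathcal{T}$ makes sense on $H^1$ despite the quadratic growth of $b$, by exploiting symmetry; second, compute the first variation by a direct trilinear expansion and verify that it defines an $L^2$ function when $w\in H^2$.

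For well-definedness, I would write $\phi:=|\hat w|$ (even, since $w$ is real) and parametrize the plane $\{(k,\ell,m):k+\ell+m=0\}$ by $(k,m)$, with $\ell=-k-m$. The product $\phi(k)\phi(\ell)\phi(m)$ and the measure $dk\,dm$ on that plane are both invariant under the six permutations of $(k,\ell,m)$, and $b$ itself is symmetric, so the three weighted integrals obtained from $|b|\leq C(1+k^2+\ell^2+m^2)$ by placing the quadratic weight on $k$, on $\ell$ or on $m$ all coincide. This gives
$$|\mathcal{T}[w]|\,\lesssim\,\iint(1+m^2)\,\phi(-k-m)\,\phi(k)\,\phi(m)\,dk\,dm\,=\,\int(1+m^2)\,\phi(m)\,(\phi*\phi)(-m)\,dm.$$
Cauchy--Schwarz bounds the right-hand side by $\|(1+m^2)^{1/2}\phi\|_{L^2}\,\|(1+m^2)^{1/2}(\phi*\phi)\|_{L^2}$; the first factor is $\|w\|_{H^1}$ up to a constant. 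For the second, I would observe that $\phi\in L^1\cap L^2$ (the $L^1$ bound coming from $(1+k^2)^{1/2}\phi\in L^2$, $(1+k^2)^{-1/2}\in L^2$, and Cauchy--Schwarz), so $\phi*\phi\in L^2$ by Young; moreover the identity $m(\phi*\phi)(m)=2(\psi*\phi)(m)$ with $\psi(k):=k\phi(k)\in L^2$ yields $m(\phi*\phi)\in L^2$ via $L^2*L^1\to L^2$. Altogether, $|\mathcal{T}[w]|\lesssim\|w\|_{H^1}^3$.

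For the variational derivative, since $\mathcal{T}$ is a cubic polynomial in $w$ the expansion
$$\mathcal{T}[w+\eps h]-\mathcal{T}[w]=\eps\,A_1[w,h]+\eps^2 A_2[w,h]+\eps^3\mathcal{T}[h]$$
is exact, with $A_1$ collecting the three terms involving one $h$ and two $w$'s. By the symmetry of $b$ and of the integration plane under permutations, these three terms coincide, cancelling the prefactor $1/3$, and after the change of variable $n=-k-m$,
$$A_1[w,h]=\int\hat h(n)\left[\int b(n,-n-m,m)\,\hat w(-n-m)\,\hat w(m)\,dm\right]dn.$$
Parseval's identity for real-valued functions, $\int h\,\delta\mathcal{T}[w]\,dx=(2\pi)^{-1}\int\hat h(k)\,\widehat{\delta\mathcal{T}[w]}(-k)\,dk$, then identifies $\widehat{\delta\mathcal{T}[w]}(k)$ with the claimed formula $2\pi\int b(-k,k-m,m)\,\hat w(k-m)\,\hat w(m)\,dm$ by a simple substitution $k\to -k$.

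It remains to verify that this formula actually defines an $L^2$ function when $w\in H^2$. Using $k^2\leq 2(k-m)^2+2m^2$, we have $|b(-k,k-m,m)|\leq 3C(1+(k-m)^2)(1+m^2)$, so that $|\widehat{\delta\mathcal{T}[w]}(k)|\lesssim(F*F)(k)$ with $F(k):=(1+k^2)|\hat w(k)|$. For $w\in H^2$, $F\in L^2$; a Cauchy--Schwarz estimate with weight $(1+k^2)^{-a}$ for some $a>3/2$ also gives $F\in L^1$, whence $F*F\in L^2$ by Young. I expect the well-definedness step to be the main obstacle: the quadratic growth of $b$ combined with a naive use of $w\in H^1$ would require the stronger regularity $H^{3/2+}$, and it is exactly the redistribution of the polynomial weight via permutation symmetry that saves the claimed $H^1$ bound.
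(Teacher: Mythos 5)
Your well-definedness estimate and your computation of the first variation are correct, and essentially parallel the paper's (the paper redistributes the quadratic weight using the constraint $k+\ell+m=0$ to bound it by products of two distinct frequencies, while you use permutation symmetry to load the whole weight on one variable; both give the $\|w\|_{H^1}^3$ bound, and the Parseval identification of $\widehat{\delta\mathcal{T}[w]}$ is the same). The genuine gap is in your last step, the verification that the formula defines an $L^2$ function for $w\in H^2$. Having factored the bound on $b$ as $|b(-k,k-m,m)|\leq 3C\,(1+(k-m)^2)(1+m^2)$, you arrive at $|\widehat{\delta\mathcal{T}[w]}(k)|\lesssim (F*F)(k)$ with $F(k)=(1+k^2)|\widehat{w}(k)|$, and you claim $F\in L^1$ for $w\in H^2$ by Cauchy--Schwarz with weight $(1+k^2)^{-a}$, $a>3/2$. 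This is false: Cauchy--Schwarz gives $\int(1+k^2)|\widehat{w}|\,\dif k\leq\|(1+k^2)^{1-s/2}\|_{L^2}\,\|w\|_{H^s}$, which requires $s>5/2$, and for instance $\widehat{w}(k)=(1+|k|)^{-5/2}(\log(2+|k|))^{-1}$ (taken even and positive) yields $w\in H^2$ with $F\in L^2\setminus L^1$; for such $w$ one even has $(F*F)(k)\gtrsim(\log|k|)^{-2}$ at infinity, so $F*F\notin L^2$ and the Young-inequality conclusion collapses. As written, your argument only proves the claim for $w\in H^{5/2+\eps}$, not $H^2$. The loss is produced by the factorization itself: the product bound contains the cross term $(k-m)^2m^2$, which is not controlled by $C(1+k^2+(k-m)^2+m^2)$, and it is precisely this term that forces two derivatives onto each convolution factor.

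The repair --- and this is what the paper does --- is to keep the bound as a sum, $|b(-k,k-m,m)|\leq C\,(1+3(k-m)^2+3m^2)$, so that
$$|\widehat{\delta\mathcal{T}[w]}(k)|\,\lesssim\,(|\widehat{w}|*|\widehat{w}|)(k)\,+\,(|\widehat{w_{yy}}|*|\widehat{w}|)(k)\,+\,(|\widehat{w}|*|\widehat{w_{yy}}|)(k)\,,$$
where each term is the convolution of an $L^1$ function with an $L^2$ function, hence in $L^2$ by Young: indeed $\widehat{w}\in L^1$ already for $w\in H^1$ (by the same Cauchy--Schwarz argument you used in the first step) and $\widehat{w_{yy}}\in L^2$ for $w\in H^2$. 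With this single modification your proof is complete and follows the paper's lines. A minor additional point: to apply the real Parseval identity and to call the resulting $L^2$ function a variational derivative, you should note that the formula defines a real-valued function, which follows from $b(-k,-\ell,-m)=\overline{b(k,\ell,m)}$ together with $\widehat{w}(-k)=\overline{\widehat{w}(k)}$.
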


\begin{proof} By the Cauchy--Schwarz inequality, $\hw\in L^1(\R)$ for all $w\in H^1(\R)$, and
$$\|\hw\|_{L^1}\,\leq\,\sqrt{\pi}\,\|w\|_{H^1}\,.$$
Therefore, using that 
$$|b(-k,k-m,m)|\,\leq\,C(1+|k|^2+|k-m|^2+|m|^2)\,\leq\,C(1+2|k||k-m|+2|k-m||m|+2|m||k|)\,,$$ 
we see that the trilinear mapping ${\mathcal T}$ is continuous on $H^1(\R)$, with
$$|{\mathcal T}[w]|\leq 8C\,\pi^2\,\|\hw\|_{L^1}\,\|w\|_{H^1}^2\,\leq 8C\,\pi^3\,\|w\|_{H^1}^3\,,$$
by the Fubini, Cauchy--Schwarz, and Plancherel  theorems.
Furthermore, for all $w$, $v\in H^1(\R;\R)$, we have
$$\frac{\dif }{\dif \theta} {\mathcal T}[w+\theta v]_{|\theta=0}\begin{array}[t]{l}\,=\,\begin{array}[t]{l}
\frac{1}{3}\,\iint b(-k-m,k,m)\,\hv(-k-m)\,\hw(k)\,\hw(m)\,\dif k\,\dif m\\[10pt]
+\frac{1}{3}\,\iint b(-k-m,k,m)\,\hw(-k-m)\,\hv(k)\,\hw(m)\,\dif k\,\dif m\\[10pt]
+\frac{1}{3}\,\iint b(-k-m,k,m)\,\hw(-k-m)\,\hw(k)\,\hv(m)\,\dif k\,\dif m\\[10pt]
\end{array}\\
\,=\,
\iint b(-k-m,k,m)\,\hw(-k-m)\,\hw(k)\,\hv(m)\,\dif k\,\dif m\end{array}$$
by the symmetry of $b$ and obvious changes of variables. The integral above is well defined for all $w$, 
$v\in H^1(\R;\R)$, as expected from the fact that ${\mathcal T}$ is differentiable since it is trilinear continuous. 
However, the definition of its variational derivative $\delta {\mathcal T}[w]$ is more demanding on $w$. It amounts 
to rewriting
$$\frac{\dif }{\dif \theta} {\mathcal T}[w+\theta v]_{|\theta=0}\,=\,\int \delta {\mathcal T}[w] \;v(y)\,\dif y\,,$$
so that $\delta {\mathcal T}[w]$ bears all the derivatives. In view of the large frequency behavior of the kernel $b$, 
it turns out that this is possible as soon as $w$ belongs to $H^2$, as we show below.

Let us recall that, by the Plancherel theorem,
$$\textstyle \int \widehat{f}(-m)\,\hv(m)\,\dif m\,=\,2\pi\,\int f(y)\,v(y)\,\dif y$$
for all real valued, square integrable functions $f$ and $v$.
We claim that for $w\in H^2(\R;\R)$, we can define a real valued $f\in L^2$ by 
$$\widehat{f}(m)\,=\,\int b(-m,m-k,k)\,\hw(m-k)\,\hw(k)\,\dif k\,.$$
Indeed, using that 
$$|b(-m,m-k,k)|\,\leq\,C(1+m^2+(m-k)^2+k^2)\,\leq\,C(1+3(m-k)^2+3k^2)\,,$$
we find that
$$\textstyle \left|\int b(-m,m-k,k)\,\hw(m-k)\,\hw(k)\,\dif k\right|\leq C \big( (|\hw|*|\hw|)(m) 
+ 6 (|\hw|*|\widehat{w_{yy}}|)(m)\big)\,,$$
and 
$$\||\hw|*|\hw|\|_{L^2}\leq \|\hw\|_{L^1} \|\hw\|_{L^2} = 2\pi \|\hw\|_{L^1} \|w\|_{L^2}\,,$$
$$\||\hw|*|\widehat{w_{yy}}|\|_{L^2}\leq \|\hw\|_{L^1} \|\widehat{w_{yy}}\|_{L^2} \leq 2\pi \|\hw\|_{L^1} \|w\|_{H^2}\,.$$
This shows that
$$\textstyle \int \left(\int b(m,-m-k,k)\,\hw(-m-k)\,\hw(k)\,\dif k\right)\,\hv(m)\,\dif m\,=\,2\pi\,\int f(y)\,v(y)\,\dif y$$
for all $w\in H^2(\R;\R)$.
By the Fubini theorem and the symmetry of $b$, the left-hand side is exactly what we have found for the 
directional derivative of ${\mathcal T}$. We thus have
$$\frac{\dif }{\dif \theta} {\mathcal T}[w+\theta v]_{|\theta=0}\,=\,2\pi\,\int f(y)\,v(y)\,\dif y\,,$$
which means that $\delta {\mathcal T}[w]=2\pi f$.
\end{proof}

\begin{proposition}\label{prop:vardermom}
Let us consider the functional ${\mathcal M}$ defined by 
$${\mathcal M}[w]=\frac{1}{2}\,\iint |k|\,|\hw(k)|^2\,\dif k\,$$
for all $w\in H^{1/2}(\R)$. Then its variational derivative $\delta {\mathcal M}$  is such that
$$\partial_y w = - \tfrac{1}{2\pi}\, {\mathcal H}({\delta {\mathcal M}[w]})\,,\;\forall w\in H^1(\R;\R)\,.$$
\end{proposition}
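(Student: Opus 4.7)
The plan is to compute the Gâteaux derivative of $\mathcal{M}$ in an arbitrary admissible direction, rewrite it via Plancherel as an integral against the test function, and then apply the Fourier symbol of $\mathcal{H}$ to recognize $-2\pi\partial_y w$.

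First, for real-valued $w,v\in H^{1/2}(\R)$, I would expand $\mathcal{M}[w+\theta v]$ and differentiate at $\theta=0$. Using that $\hat{w}(-k)=\overline{\hat{w}(k)}$ and $\hat{v}(-k)=\overline{\hat{v}(k)}$, together with the fact that $|k|$ is even, the cross terms combine to give
$$\frac{\dif}{\dif\theta}\mathcal{M}[w+\theta v]_{|\theta=0}=\int_{\R} |k|\,\hat{w}(k)\,\hat{v}(-k)\,\dif k.$$
The absolute convergence is immediate because $|k|\,|\hat w|\,|\hat v|\le \frac12(|k|\,|\hat w|^2+|k|\,|\hat v|^2)$ is integrable when $w,v\in H^{1/2}$.

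Second, assuming the stronger hypothesis $w\in H^1(\R;\R)$, the multiplier $|k|\hat{w}(k)$ belongs to $L^2$, so it is the Fourier transform of a genuine $L^2$ function, which I denote $|\partial_y|w$ and which is real-valued (its Fourier transform satisfies the conjugate symmetry because $|k|$ is even). The Plancherel identity invoked in Proposition \ref{prop:varderham}, namely $\int \hat f(-m)\hat v(m)\dif m=2\pi\int f(y)v(y)\dif y$ for real $f,v\in L^2$, applied with $f=|\partial_y|w$, rewrites the previous expression as
$$\frac{\dif}{\dif\theta}\mathcal{M}[w+\theta v]_{|\theta=0}=2\pi\int_{\R}(|\partial_y|w)(y)\,v(y)\,\dif y,$$
which by definition of the variational derivative gives $\delta\mathcal{M}[w]=2\pi\,|\partial_y|w$.

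Finally, I would compose with $\mathcal{H}$ on the Fourier side. By the definition $\widehat{\mathcal{H}(u)}(k)=-i\,\sgn(k)\hat u(k)$ and since $\sgn(k)\cdot|k|=k$, one obtains
$$\widehat{\mathcal{H}(\delta\mathcal{M}[w])}(k)=-i\,\sgn(k)\cdot 2\pi|k|\hat{w}(k)=-2\pi ik\,\hat{w}(k)=-2\pi\,\widehat{\partial_y w}(k),$$
and inverting the Fourier transform yields $\partial_y w=-\tfrac{1}{2\pi}\mathcal{H}(\delta\mathcal{M}[w])$. There is no real obstacle in this argument; the only mild subtlety is to justify that $|\partial_y|w$ exists as an $L^2$ function, which is precisely why the statement assumes $w\in H^1$ rather than merely $H^{1/2}$ (the minimal space on which $\mathcal{M}$ itself is finite).
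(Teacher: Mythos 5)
Your proof is correct and follows essentially the same route as the paper: compute the Gâteaux derivative of ${\mathcal M}$, identify $\delta {\mathcal M}[w]=2\pi\,|\partial_y|w$ via the Plancherel identity, and conclude by comparing Fourier symbols, using $\sgn(k)\,|k|=k$. The only difference is that you spell out the intermediate steps (conjugate symmetry, integrability, $L^2$ membership of $|k|\hw$) which the paper leaves implicit.
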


\begin{proof}
The computations are similar to, and simpler than in the previous proposition. We have
$$\frac{\dif }{\dif \theta} {\mathcal M}[w+\theta v]_{|\theta=0}\,=\,
\iint |k|\,\hw(-k)\,\hv(k)\,\dif k\,=\,2\pi\,\iint u(y)\,v(y)\,\dif y\,,
$$
provided that $\widehat{u}(-k)=|k|\hw(-k)$ \emph{a.e}, or equivalently,
$ik \hw(k)=i\sgn(k)\widehat{u}(k)$, that is,
$\partial_y w = - {\mathcal H}(u)=- \tfrac{1}{2\pi}\, {\mathcal H}({\delta {\mathcal M}[w]})$.
\end{proof}

\paragraph{Acknowledgement.} This work has been supported by the ANR project BoND (ANR-13-BS01-0009-01).

\bibliographystyle{plain}
\bibliography{BC2}
\end{document}